\documentclass[a4paper,10pt]{article}
\usepackage{a4}
\usepackage{amsmath, amsfonts, amssymb, amsthm, amscd, bezier}
\usepackage[brazil,english]{babel}
\usepackage[T1]{fontenc}
\usepackage{graphicx}
\usepackage[all]{xy}
\usepackage{color}
\usepackage{fullpage}
\usepackage{cite}
\advance\oddsidemargin by -0.5cm \advance\textwidth by 1cm
\usepackage{mathrsfs}
\usepackage[utf8]{inputenc}	
\usepackage{comment}
\usepackage{indentfirst}
\usepackage{multirow}
\def\href{}

\usepackage[pagebackref=true]{hyperref}
\renewcommand*{\backref}[1]{}
\renewcommand*{\backrefalt}[4]{%
  \ifcase #1
    Not cited.
  \or
    (Cited on page #2.)
  \else
    (Cited on pages #2.)
  \fi
}
\PassOptionsToPackage{x11names}{xcolor}

\usepackage{tikz}
\usetikzlibrary{arrows,snakes,positioning}

\usepackage{lmodern}			
\usepackage[T1]{fontenc}		
\usepackage[utf8]{inputenc}		

\usepackage{color,xcolor}		
\usepackage{graphicx}			
\usepackage{microtype} 			

\usepackage[english,onelanguage]{algorithm2e}	
 \usepackage{amsbsy}			
 \usepackage{amscd}			
 \usepackage{amsfonts}			
 \usepackage{amsmath}			
 \usepackage{amssymb}			
 \usepackage{amstext}			
\usepackage{amsthm}			
\usepackage{hyperref}			
\usepackage{cleveref}			
\usepackage{dsfont}			
 \usepackage{ifthen}			
\usepackage{listings}           	
\usepackage{mathabx}			
 \usepackage{mathrsfs}			
\usepackage{listings}           	

\usepackage{soul} 

\definecolor{azulagua}{RGB}{11, 243, 249}
\definecolor{green}{RGB}{17, 138, 49}
\definecolor{purple}{RGB}{59, 38, 110}

\newtheorem{proposition}{Proposition}

\newtheorem{definition}{Definition}

\newtheorem{remark}{Remark}

\usepackage{float}

\begin{document}

\begin{center}

\textbf{  \LARGE  \bf {Conley Index of Limit Cycles in Filippov Vector Fields}
}

\vspace{0.55cm}

\hspace{0.3cm} 
L. Cândido \footnote{Financed by the Coordenação de Aperfeiçoamento de Pessoal de Nível Superior - Brasil (CAPES) - Finance Code 001}
\hspace{0.3cm} D. V. S. Lima \footnote{ Supported by FAPESP under grants 	2025/12435-9, 2022/16455-6. Supported by CAPES through the MATH-AmSud program, Grant No. 88881.179491/2025-01.} 
\hspace{0.3cm}K. A. de Rezende
\footnote{Partially supported by FAPESP under grant 2022/16455-6. Supported by CAPES through the MATH-AmSud program, Grant No. 88881.179491/2025-01. }

\end{center}

\begin{abstract}
Within the framework of multivalued dynamical systems, we compute the Conley index for a class of limit cycles in planar Filippov systems. We classify these cycles according to their interaction with the switching manifold, including crossing, sliding, and mixed regimes. Using the Conley index for multivalued admissible maps developed by Mrozek, we construct the appropriate index pairs and determine the corresponding indices for each class. These computations enlarge the class of Filippov invariant sets for which the Conley index is known and extend previous results on low-codimension singularities to periodic dynamics.
\end{abstract}

\noindent \textbf{Keywords:} Conley Index, Filippov Vector Fields, Limit Cycle.

\noindent \textbf{AMS subject classification:} 	37B30; 34C25; 34C05

\tableofcontents

\section{Introduction}

Limit cycles represent a class of fundamental recurrent phenomena in dynamical systems, playing a central role in the understanding of long-term behavior \cite{du2008bifurcation}, \cite{novaes2020study}. In the context of smooth vector fields, hyperbolic limit cycles are structurally stable objects. The stability properties and bifurcations of these cycles are of fundamental importance to numerous applications, encompassing a broad spectrum of disciplines, from classical mechanics to biological modeling, \cite{VanDerPol1926}, \cite{Winfree1967}.

In Filippov systems, however, the dynamics become considerably richer and more subtle due to the discontinuity of the vector field, \cite{filippov2013differential}. It is important to note that trajectories may interact with the switching manifold through crossing, sliding, or escaping motions. This interaction gives rise to new classes of periodic phenomena that have no analogue in the smooth setting. In particular, the phenomenon of "sliding cycles" and "hybrid cycles" has been observed, which exhibit a combination of smooth with sliding dynamics along the discontinuity set.

These nonsmooth periodic orbits are not merely mathematical curiosities; they arise naturally in a wide variety of applications, including mechanical systems with dry friction, electrical circuits with idealized switching components, and biological models with threshold or relay-type mechanisms, \cite{cristiano2017hopf}, \cite{leinebifurcations}, \cite{PILTZ}. Consequently, a comprehensive understanding of their existence, stability, and bifurcation mechanisms is imperative for both the theoretical development of nonsmooth dynamical systems and their applications. Despite their relevance, characterizing the global behavior of the associated dynamics remains a difficult task.

Conley index theory offers a natural approach to addressing this issue, \cite{conley1971isolated}, \cite{conley1978isolated}. The main advantage of this theory is its robustness under continuation; the index depends only on the local behavior around the isolated invariant set and persists under small perturbations. This makes it well-suited for nonsmooth systems, where trajectories are not uniquely defined and classical flow techniques fail.

Mrozek developed a fundamental extension of this theory by constructing the Conley index for multivalued admissible maps and dynamical systems, \cite{mrozek1990cohomological}. This framework is essential in the present context. Filippov systems can naturally be formulated as differential inclusions, whose solutions are generally non-unique and evolve according to set-valued dynamics. The multivalued Conley-Mrozek framework retains the key properties of the classical theory, such as homotopy invariance, continuation, and detectability of isolated invariant sets, while providing a rigorous setting that fully incorporates sliding motion and discontinuous evolution. This makes the Conley-Mrozek framework one of the few tools capable of treating nonsmooth dynamics in a genuinely global and topological way.

In the previous work \cite{candido2026conley}, we computed the Conley indices of several low-codimension singularities in planar Filippov systems. This demonstrated how the index captures local nonsmooth dynamics near the switching manifold.

In this paper, we turn to isolated Filippov limit cycles. We compute the Conley index for each class of limit cycle according to its interaction with the switching manifold. These results expand the class of Filippov invariant sets for which the Conley index is known and extend the previous singularity-based analysis to periodic orbits. This work provides a step toward a topological classification of invariant structures in nonsmooth systems.

This paper is organized as follows. In Section~\ref{preliminary} we provide a background material on  Filippov vector fields,
differential inclusions, and Conley index for multivalued flows. In Section~\ref{sec:CILC} we construct a Conley-Mrozek index pair for each class of Filippov limit cycles and 
calculate its Conley index. Finally, in Section~\ref{sec:finalremarks} we discuss future directions using Conley-Mrozek theory in the investigation of global dynamics in Filippov vector fields.

\section{Background}\label{preliminary}
Let $U\subset \mathbb{R}^n$ be a bounded connected open set and denote by $K=\overline{U}$ its closure. We denote by $C^r(K,\mathbb{R}^n)$ the space of $C^r$-smooth vector fields $X: K \rightarrow \mathbb{R}^n$ on $K$. Given a differentiable function $h:K \rightarrow \mathbb{R}$  such that $0$ is a regular value of $h$, we denote by  $\Omega^r_h(K,\mathbb{R}^n)$ the space of piecewise smooth vector fields $Z$ such that for  each $x \in K$
\begin{equation}\label{first}
    Z(x)=\left\{
    \begin{array}{ccc}
       X(x), & \text{if} & h(x)>0, \\
        Y(x), & \text{if} & h(x)<0, \\ 
    \end{array}
    \right.
\end{equation}
where $X,Y \in C^r(K,\mathbb{R}^n)$.
 A vector field of the form  \eqref{first} is usually denoted by  $Z=(X,Y)$, and it is referred to as a \textbf{Filippov system}. The  hypersurface $\Sigma=h^{-1}(0)$ is referred to as the \textbf{switching manifold of codimension 1}.

The space $C^r(K,\mathbb{R}^n)$ is ewdowed with the usual  $C^r$ topology, while $\Omega^r_h(K,\mathbb{R}^n)$ carries the correspoding product topology. Since the vector field may be discontinuous along $\Sigma$, solutions are understood in the sense of differential inclusions.

Recall that,  the \textbf{convex closure} of a subset  $A\subset \mathbb{R}^n$ is the intersection of all convex sets containing a $A$ and its denoted by  $Conv(A)$.

Given a Filippov system $Z=(X,Y) \in \Omega_h^r(K,\mathbb{R}^n)$, define  the function $$ f(x,\lambda)=\dfrac{1+\lambda}{2}X(x)+\dfrac{1-\lambda}{2}Y(x),$$ where $ \lambda \in [-1,1]. $ Then $Conv(\{X(x),Y(x)\})=\{f(x,\lambda); \lambda \in [-1,1] \}$.
Associated to $Z$, we define the multi-valued map\footnote{Throughout this paper, the symbol $\twoheadrightarrow$ is used to represent multi-valued maps.} $ \mathcal{F}_Z(x) \twoheadrightarrow \mathbb{R}^n$ by
\begin{equation}\label{Z}
    \mathcal{F}_Z(x)=\left\{
    \begin{array}{lcc}
      \{ X(x)\}, & \text{if} & h(x)>0, \\
       \{ Y(x)\}, & \text{if} & h(x)<0, \\ 
       Conv(\{X(x),Y(x)\}),  & \text{if} & h(x)=0.
    \end{array}
    \right.
\end{equation}
A curve $\varphi:(-t,t)\rightarrow K$ is said to be a \textbf{solution} of the Filippov system \eqref{first} if it is a solution of $\dot{x} \in \mathcal{F}_Z(x).$ Hence, the dynamics of a Filippov system can be studied through the \textbf{differential inclusion $\mathcal{F}_Z$} given in \eqref{Z}.
Conditions ensuring existence of solutions for such inclusions were established by Filippov in \cite{filippov2013differential}, and termed as basic conditions.

\begin{definition}
    A \textbf{multi-valued map $f:U \twoheadrightarrow V$} is a mapping which assigns to every $x \in U$ some subset $f(x) \subset V$.
\end{definition}
\begin{definition}
    Let $\mathcal{F}:U\subset \mathbb{R}^n\twoheadrightarrow \mathbb{R}^n$ be an upper-semicontinuous multi-valued map.  We say that  $\mathcal{F}$ satisfies the \textbf{basic conditions} if, for all $x_0 \in U$, the set $\mathcal{F}(x_0)$ is nonempty, compact and convex.
\end{definition}
If a differential inclusion satisfies the basic conditions, we say that it is a \textbf{basic differential inclusion}. If $\dot{x} \in \mathcal{F}_Z(x)$ is a Filippov system, then $\mathcal{F}_Z$ satisfies the basic conditions, see \cite{thieme2021conley}.

\subsection{Dynamics on the Switching Manifold}

 For each $p \in \Sigma$,  the \textbf{Lie derivative} of $h(p)$  in the direction of the vector  $X(p)$ is defined by $$Xh(p)=\langle \nabla h(p),X(p)\rangle.$$  Analousgously,  define $X^2h(p)=\langle \nabla Xh(p),X(p)\rangle$ and, recursively, $X^nh(p)=\langle \nabla X^{(n-1)}h(p),X(p)\rangle$.
Observe that $Xh(p)=0$ if and only if the vector $X(p)$ is tangent to the switching manifold $\Sigma$ at $p$. Such a point is called a \textbf{tangency point}.
Using the notation introduced in  \eqref{first}, the switching manifold $\Sigma$ can be decomposed into the following regions:
\begin{itemize}
\item $\Sigma^c:=\{p  \in \Sigma ;Xh(p)Yh(p) > 0\}$, called the \textit{crossing} region.
\item $\Sigma^e:=\{p \in \Sigma ; Xh(p)Yh(p) < 0 \mbox{ with } Xh(p) > 0 \mbox{ and } Yh(p) < 0\}$, called the \textit{escaping} region.
\item $\Sigma^s:=\{p \in \Sigma;Xh(p)Yh(p) < 0 \mbox{ with } Xh(p) < 0 \mbox{ and } Yh(p) > 0\}$, called the \textit{sliding} region.
\end{itemize}
In \autoref{regioes}, we illustrate these regions.
\begin{figure}[h!]
    \centering
    \includegraphics[width=9cm]{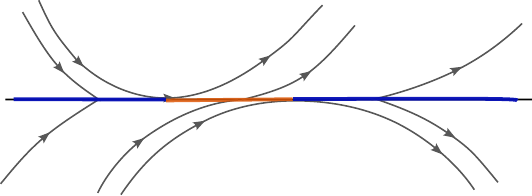}
    \put(-263,42){$\Sigma$}
    \put(-280,80){$X$}
    \put(-280,0){$Y$}
    \put(-178,43){$\bullet$}
    \put(-178,50){\small $x_1$}
    \put(-118,43){$\bullet$}
    \put(-118,34){\small $x_2$}
    \put(-240,34){\small \textcolor{blue}{$\Sigma^s$}}
    \put(-150,34){\small\textcolor{orange}{$\Sigma^c$}}
    \put(-40,34){\small\textcolor{blue}{$\Sigma^e$}}
    \caption{Sliding, crossing and escaping regions, $\Sigma^s$, $\Sigma^c$, $\Sigma^e$ respectively.}
        \label{regioes}
\end{figure}

The points $x_1$ and $x_2$ shown in \autoref{regioes} are tangency points. In particular, they belong to the boundaries of the regions $\Sigma^s$, $\Sigma^c$ and $\Sigma^e$, since $Xh(x_1)=0$ and $Yh(x_2)=0$.

If $p\in \Sigma^c$, then the trajectory passing through $p$ is obtained by concatenating the trajectories $\varphi_X$ and $\varphi_Y$, of the vector fields $X$ and $Y$, on $p$.  On the other hand, when $p\in \Sigma^s\cup \Sigma^e$, the dynamics on $\Sigma$ is described by an additional vector field, called the sliding vector field and defined below.

\begin{definition}\label{def3}
   Under the above conditions, for every $p\in \Sigma^e \cup \Sigma^s$, the \textbf{sliding vector field} associated to $Z=(X,Y)$, denoted by $\widetilde{Z}$, is defined by:
    \begin{equation}\label{campodeslizante}
        \widetilde{Z}(p)=\frac{Yh(p)X(p)-Xh(p)Y(p)}{Yh(p)-Xh(p)}.
    \end{equation}
\end{definition}  See \autoref{campodeslizantefig}.

\begin{figure}[h!]
    \centering
     \includegraphics[width=7.5cm]{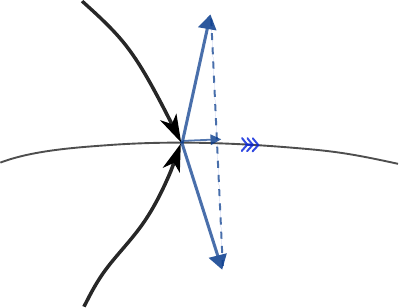}
     \put(-225,75){$\Sigma$}
     \put(-200,130){$X$}
     \put(-200,30){$Y$}
     \put(-118,85){$\bullet$}
     \put(-116,74){\small $p$}
     \put(-105,10){\small $Xh(p)$}
     \put(-110,160){\small $Yh(p)$}
    \caption{Sliding vector field defined on $p$.}
    \label{campodeslizantefig}
\end{figure}

Consequently, if $\varphi_{\widetilde{Z}}$ is a solution of $\dot{x}=\widetilde{Z}(x)$ with initial condition $x(0)=p$, then $\varphi_{\widetilde{Z}}$ is a solution of $\dot{x}\in \mathcal{F}_Z(x)$.

Observe that, if $p \in \Sigma^s$, then for $t<0$ sufficiently small one has $h(\varphi_X(p,t))>0$ and $h(\varphi_Y(p,t))<0$. In a similar way, if $p \in \Sigma^e$, then for $t>0$ sufficiently small one has $h(\varphi_X(p,t))>0$ and $h(\varphi_Y(p,t))<0$.
\begin{definition} 

If $p \in \Sigma^s$, define 
        \begin{equation*}
        \varphi_Z^X(p,t)=\left\{
            \begin{array}{cc}
               \varphi_X(p,t) , & t\le 0,  \\
                \varphi_{\widetilde{Z}}(p,t), & t\ge 0 ,
            \end{array}
            \right.
\qquad \text{and} \qquad
            \varphi_Z^Y(p,t)=\left\{
            \begin{array}{cc}
               \varphi_Y(p,t) , & t\le 0,  \\
                \varphi_{\widetilde{Z}}(p,t), & t\ge 0 .
            \end{array}
            \right.
        \end{equation*}
         If $p \in \Sigma^e$, define 
        \begin{equation*}
            \varphi_Z^X(p,t)=\left\{
            \begin{array}{cc}
               \varphi_X(p,t) , & t\ge 0,  \\
                \varphi_{\widetilde{Z}}(p,t), & t\le 0 ,
            \end{array}
            \right.
  \qquad \text{and} \qquad
            \varphi_Z^Y(p,t)=\left\{
            \begin{array}{cc}
               \varphi_Y(p,t) , & t\ge 0,  \\
                \varphi_{\widetilde{Z}}(p,t), & t\le 0 .
            \end{array}
            \right.
        \end{equation*}
\end{definition}

Therefore, for $p \in \Sigma^e \cup \Sigma^s$, the set $\{\varphi_Z^X$, $\varphi_Z^Y$, $\varphi_{\widetilde{Z}}\}$ is a solution of $\dot{x}\in \mathcal{F_Z}(x)$. Now that the behavior of the trajectories of a Filippov vector field is well comprehended, it is essential to define singularities of such fields.

\subsection{Singularities of Filippov Vector Fields}
Now we present the definitions of singular points of a Filippov system $Z=(X,Y)\in \Omega^r_h(K,\mathbb{R}^n)$.

\begin{definition}
Consider  $Z=(X,Y)\in \Omega_h^r(K,\mathbb{R}^n)$ a Filippov vector field. 
\begin{enumerate}
    \item[(a)]  A point $p\in \Sigma$ is  a \textbf{tangency point}  of $X$ (resp. $Y$) if $Xh(p)=0$ (resp. $Yh(p)=0)$. We say that a tangency point $p \in \Sigma$ is a \textbf{visible fold} of $X$ (resp. $Y$) if $X^2h(p)>0$ (resp. $Y^2h(p)<0$).
Reversing the inequalities, we call the tangency point an \textbf{invisible fold}.
    \item[(b)]  A point $p\in \Sigma$ is a \textbf{$\Sigma$-regular point} of $X$ (resp. $Y$), if $Xh(p)\ne0$ (resp. $Yh(p)\ne0$).
     \item[(c)]  A point $p \in K$ is a \textbf{real singularity of Z} if $h(p)>0$ (resp. $h(p)<0$) and $X(p)=0$ (resp. $Y(p)=0$). In the case that $p \in K$ is such that $h(p)<0$ (resp. $h(p)>0$) and $X(p)=0$ (resp. $Y(p)=0$) then $p$ is a \textbf{virtual singularity of Z}.   
    \item[(d)] A point $p \in \Sigma$ is a \textbf{pseudo-equilibrium of $Z$} if $p \in \Sigma^e\cup \Sigma^s$ and $\widetilde{Z}(p)=0$.
\end{enumerate}
\end{definition}

In \autoref{fig:dobravisinv}, we illustrate some examples of tangency points.
\begin{figure}[h!]
    \centering
    \includegraphics[width=10cm]{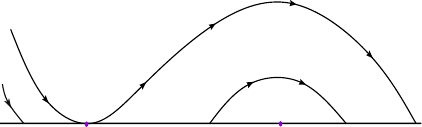}
    \put(-230,-6){\small $p_1$}
    \put(-300,0){$\Sigma$}
    \put(-310,45){$X$}
    \put(-100,-6){\small $p_2$}
    \caption{$p_1$ is a visible fold and $p_2$ an invisible fold of $X$.}
     \label{fig:dobravisinv}
\end{figure}

 The \textbf{singularities of a Filippov system} (\ref{first}) are the points $p \in Z$ such that $p$ is a point of tangency or a real singularity or a pseudo-equilibrium. 
Any other point of $Z$ is called a \textbf{regular point}. We emphasize that there exist singularities of Filippov vector fields that do not admit a stationary trajectory, meaning that they do not constitute invariant sets with respect to the flow.
\begin{definition}
When $p$ is a hyperbolic singularity of the sliding vector field $\widetilde{Z}$, it is referred to as a \textbf{hyperbolic pseudo-equilibrium}.
\end{definition}

For each $m>1$, denite by  $\mathcal{S}_{X,m}$ the set of tangency-regular points of the vector field $X$ of order $m$, i.e.
$$\mathcal{S}_{X,m}:=\{x \in \Sigma : Xh(x)=\dots =X^{m-1}h(x)=0, X^m(x)\ne0, Yh(p)\ne 0\}.$$ 
 Denote the set of visible fold regular points of $X$ by $\mathcal{S}^+_X:=\{x \in \Sigma : Xh(x)=0 \mbox{ and }X^2h(x)>0\}.$ The following holds:
\begin{proposition}\label{prop1}
The set $\mathcal{S}_X^+$ is a smooth embedded submanifold of \(\mathbb{R}^n\) of codimension \(2\).
\end{proposition}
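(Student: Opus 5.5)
We will realize $\mathcal{S}^+_X$ as an open subset of a regular level set. Consider the smooth map $G:U\to\mathbb{R}^2$, $G(x)=(h(x),Xh(x))$, where $Xh(x)=\langle\nabla h(x),X(x)\rangle$ is now regarded as a function on all of $U$ (it is $C^{r-1}$, so we take $r\ge 2$, or $r$ large enough, so that $G$ is at least $C^1$). By definition,
\[
\mathcal{S}^+_X=G^{-1}(0,0)\cap\{x: X^2h(x)>0\},
\]
and the second set is open because $X^2h$ is continuous. An open subset of an embedded submanifold is again an embedded submanifold of the same dimension. So it suffices to show that $(0,0)$ is a regular value of $G$ at every point of $\mathcal{S}^+_X$. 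The regular value theorem then gives an embedded submanifold of codimension $2$, of class $C^{r-1}$; we interpret ``smooth'' accordingly, or take $r=\infty$.

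The key step is to prove that $\nabla h(p)$ and $\nabla(Xh)(p)$ are linearly independent for every $p\in\mathcal{S}^+_X$. Neither vector vanishes. First, $\nabla h(p)\neq0$ because $0$ is a regular value of $h$. Second, $\langle\nabla(Xh)(p),X(p)\rangle=X^2h(p)>0$, so $\nabla(Xh)(p)\ne0$.

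Suppose the two gradients were dependent, say $\nabla(Xh)(p)=c\,\nabla h(p)$. Pairing with $X(p)$ gives
\[
X^2h(p)=c\,\langle\nabla h(p),X(p)\rangle=c\,Xh(p)=0,
\]
which contradicts $X^2h(p)>0$. Hence $DG(p)$ has rank $2$ and $G$ is a submersion near each point of $\mathcal{S}^+_X$.

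We then restrict $G$ to the open set $V=\{x\in U: X^2h(x)>0\}$. There $\mathcal{S}^+_X=(G|_V)^{-1}(0,0)$, and by the previous step $(0,0)$ is a regular value of $G|_V$. The preimage theorem then yields that $\mathcal{S}^+_X$ is an embedded submanifold of $\mathbb{R}^n$ of dimension $n-2$, that is, of codimension $2$. If $\mathcal{S}^+_X$ is empty the statement holds vacuously.

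The only real obstacle is the linear independence of the two gradients, and the pairing-with-$X(p)$ argument resolves it directly. The remaining care is bookkeeping about regularity. We need $h\in C^2$ and $X\in C^1$ at least, so that $X^2h$ is defined and continuous and $G$ is $C^1$. We will state that the result holds with the regularity inherited from $h$ and $X$. Points on the boundary of $K$ can be avoided by working in the open set $U$.
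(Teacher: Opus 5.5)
Your proof is correct and follows essentially the same route as the paper: both apply the regular value theorem to $(h, Xh)$ and then restrict to the open set where $X^2h>0$. The only difference is cosmetic. The paper checks surjectivity of the differential directly, using the images of a vector $v_0$ with $dh_p(v_0)\neq 0$ and of $X(p)$, whereas you check linear independence of $\nabla h(p)$ and $\nabla(Xh)(p)$ by pairing with $X(p)$, which is the dual form of the same computation.
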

\begin{proof}
Conisder the smooth map
\[
F:K\longrightarrow \mathbb{R}^2,\qquad 
F(p)=\bigl(h(p),\, Xh(p)\bigr).
\]
Then
\[
F^{-1}(0,0)=\{p\in K : h(p)=0,\; Xh(p)=0\}.
\]
and
\[
\mathcal{S}_X^+=\{p\in K : h(p)=0,\; Xh(p)=0,\; X^2h(p)>0\}
      =\{p\in F^{-1}(0,0):\; X^2h(p)>0\}.
\]

Let $p\in K$. For  \(v\in T_p\mathbb{R}^n\cong \mathbb{R}^n\), the differential of \(F\) at $p$ is given by
\[
dF_p(v)=\bigl(dh_p(v),\, d(Xh)_p(v)\bigr)
       =\left(\langle\nabla h(p),v\rangle,\;
              \langle\nabla(Xh)(p),v\rangle\right).
\]
Evaluating $dF_p$ at \(X_p:=X(p)\), we obtain
\[
dF_p(X_p)=\bigl(Xh(p),\, X^2h(p)\bigr).
\]
If \(p\in \mathcal{S}_X^+\), then \(Xh(p)=0\) and \(X^2h(p)>0\); hence
\[
dF_p(X_p)=(0,\;X^2h(p)),
\]
whose second component is nonzero.

On the other hand, because \(0\) is a regular value of \(h\), we have \(\nabla h(p)\neq 0\) for all
\(p\in h^{-1}(0)\). Thus, there exists \(v_0\in T_p\mathbb{R}^n\) such that
\[
dh_p(v_0)=\langle\nabla h(p),v_0\rangle\neq 0.
\]
Consequently, 
\[
dF_p(v_0)= (dh_p(v_0),\, d(Xh)_p(v_0)),
\]
has  nonzero first component.

It follows that the vectors \(dF_p(v_0)\) and  \(dF_p(X_p)\)
are linearly independent in \(\mathbb{R}^2\), and hence \(dF_p\) is
surjective for every \(p\in \mathcal{S}_X^+\). Consequently, each point of
\(\mathcal{S}_X^+\) is a regular point of the map \(F\).

By the Regular Value Theorem, the set
\(
F^{-1}(0,0)
\)
is a smooth submanifold of codimension \(2\) in a neighborhood of each
point of \(\mathcal{S}_X^+\). Since
\[
\mathcal{S}_X^+=\{p\in F^{-1}(0,0): X^2h(p)>0\}
\]
is an open subset of \(F^{-1}(0,0)\), it follows that \(\mathcal{S}_X^+\) is also
a smooth submanifold of codimension \(2\).
Thus,
\( 
\dim \mathcal{S}_X^+ = n-2.
\)
\end{proof}
We now present a more general result that is important to understand the behavior between the sliding vector field and the set of tangency-regular points on $\Sigma$. 
\begin{proposition}
     Given the Filippov vector field $Z=(X,Y)$, suppose that $h(x)=x_n$, where $x=(x_1, \dots,x_n)\in \mathbb{R}^n$. If $p \in \mathcal{S}_{X,m}$, then the sliding vector field $\widetilde{Z}$ has contact of order $m-1$ with $\mathcal{S}_{X,m}$ at $p$.
\end{proposition}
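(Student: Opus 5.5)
With $h(x)=x_n$, the Lie derivatives become explicit: $Xh=X_n$, and $X^kh$ is obtained by applying the derivation $L_X=\sum_i X_i\partial_i$ to $X_n$ repeatedly. The sliding field is
\[
\widetilde Z=\frac{Y_nX-X_nY}{Y_n-X_n},
\]
and it is tangent to $\Sigma$ because its $n$-th component is $(Y_nX_n-X_nY_n)/(Y_n-X_n)=0$. I read ``contact of order $m-1$ with $\mathcal{S}_{X,m}$'' in the usual way: if $\gamma(t)$ is the trajectory of $\widetilde Z$ (extended by the same formula near $p$, since $Yh(p)\neq0$ keeps the denominator away from zero), then the functions defining the tangency set vanish along $\gamma$ to order $m-1$ at $t=0$. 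Concretely, I will show that the function $g=Xh|_\Sigma$ satisfies $\widetilde Z^k g(p)=0$ for $k\le m-2$ and $\widetilde Z^{m-1}g(p)\neq0$. I will use $g$ because it cuts out the tangency locus inside $\Sigma$, and its transversality near $p$ should follow as in Proposition~\ref{prop1}.

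The key identity comes from factoring the sliding field:
\[
\widetilde Z=\frac{Y_n}{Y_n-X_n}\,X-\frac{X_n}{Y_n-X_n}\,Y=\alpha X+X_n\,\beta Y,
\]
where $\alpha=Y_n/(Y_n-X_n)$ is smooth near $p$ with $\alpha(p)=1$, and $\beta=-1/(Y_n-X_n)$ is smooth. So $\widetilde Z$ agrees with $X$ up to a unit factor, plus a correction divisible by $X_n=Xh$. I will then prove the following claim by induction on $k$:
\[
\widetilde Z^k(Xh)=\alpha^k X^{k+1}h+\sum_{j=1}^{k} c_{k,j}\,X^{j}h ,
\]
with smooth coefficients $c_{k,j}$. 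The inductive step applies $\widetilde Z=\alpha X+X_n\beta Y$ to each term. The $\alpha X$ part sends $\alpha^kX^{k+1}h$ to $\alpha^{k+1}X^{k+2}h$, and every other contribution stays in the span of $X^1h,\dots,X^{k+1}h$. The $X_n\beta Y$ part produces terms multiplied by $X_n=Xh$, so they fall in the ideal generated by $Xh$. Derivatives of the coefficients $\alpha^k$ and $c_{k,j}$ also stay in that span.

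Evaluating at $p\in\mathcal{S}_{X,m}$, where $X^jh(p)=0$ for $j\le m-1$, I get $\widetilde Z^k(Xh)(p)=0$ for $k\le m-2$. For $k=m-1$ the remaining term is $\widetilde Z^{m-1}(Xh)(p)=\alpha(p)^{m-1}X^mh(p)=X^mh(p)\neq0$, since all the other terms in the claim involve $X^jh$ with $j\le m-1$. This is exactly contact of order $m-1$. One point needs care: $\widetilde Z$ is only defined on $\Sigma^s\cup\Sigma^e$, while $p$ sits on its boundary. Because $Yh(p)\neq0$, the formula extends smoothly to a neighborhood of $p$, and I will work with this extension.

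The main obstacle is making precise what ``contact with $\mathcal{S}_{X,m}$'' means when $\mathcal{S}_{X,m}$ is not a hypersurface of $\Sigma$, for example a point when $m=n$ in the $n=2$ case. My first approach is to measure contact against the hypersurface $\{Xh=0\}\cap\Sigma$, which contains $\mathcal{S}_{X,m}$, as done above. The bookkeeping in the induction itself is routine.
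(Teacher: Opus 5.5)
Your argument is correct under the natural reading of the statement: the function $Xh$ vanishes to order exactly $m-1$ along the orbit through $p$ of the extended sliding field. It takes a genuinely different and more substantive route than the paper.

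The paper's proof is purely pointwise. Since $Xh(p)=0$, it gets $\widetilde Z(p)=X(p)$, and then computes $\langle \widetilde Z(p),\nabla X^{m-1}h(p)\rangle=X^mh(p)\neq0$. This shows only that the single vector $\widetilde Z(p)$ is transverse to the level set $\{X^{m-1}h=0\}$. For $m=2$ this coincides with your statement (transversality to the fold set). For $m>2$, however, that computation never involves derivatives of $\widetilde Z$ near $p$. So it cannot by itself determine $\widetilde Z^k(Xh)(p)$ for $2\le k\le m-1$, which is what order-$(m-1)$ contact requires; already $\widetilde Z^2(Xh)(p)$ depends on the first derivatives of $\widetilde Z$ at $p$.

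Your splitting $\widetilde Z=\alpha X+(Xh)\,\beta Y$ with $\alpha(p)=1$, together with the induction $\widetilde Z^k(Xh)\equiv\alpha^kX^{k+1}h$ modulo the ideal generated by $Xh,\dots,X^kh$, supplies exactly this higher-order control. It contains the paper's identity as the special case $\widetilde Z(p)=X(p)$. It also does not actually use $h=x_n$: you can replace $X_n,Y_n$ by $Xh,Yh$ throughout. The paper's route buys brevity; yours buys an actual proof of order-$(m-1)$ contact, for instance the quadratic tangency between the sliding orbit and the fold curve at a cusp ($m=3$).

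One aside in your proposal should be dropped. Regularity of $\{Xh=0\}\cap\Sigma$ at $p$ does not follow ``as in Proposition~\ref{prop1}'' when $m\geq 3$, because that argument uses $X^2h(p)\neq0$. For example, when $n=2$ and $m\geq 3$, $X(p)$ spans $T_p\Sigma$ and $d(Xh)_p(X(p))=X^2h(p)=0$, so $Xh|_\Sigma$ has a critical point at $p$. Your vanishing-order computation for $g=Xh|_\Sigma$ does not need this regularity, so nothing else is affected.
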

   \begin{proof}
       Denote $X=(f_1, \dots,f_n)$ where $f_i:K \rightarrow \mathbb{R}$ is a $C^r$-smooth function for each $i \in \{1, \dots, n\}$.   
       For $x \in \Sigma^{e,s}$, the sliding vector field $\widetilde{Z}(x)$ is given by:
       $$\widetilde{Z}(x)=\frac{Yh(x)X(x)-Xh(x)Y(x)}{Yh(x)-Xh(x)}.$$
       Since $p \in \mathcal{S}_{X,m}$, then $Xh(p)=0$, $Yh(p)\ne 0$ and $p \in \Sigma$. Hence $p=(p_1, \dots,  p_{n-1},0)$ and we can extend $\widetilde{Z}$ to $p$. $$\widetilde{Z}(p)=\frac{Yh(p)X(p)}{Yh(p)}=(f_1(p),\dots,f_{n-1}(p),0).$$
       Then at $p$ the sliding vector field can be extended as the restriction of the vector field $X$.
       On the other hand, the Lie derivatives have the form:
       \begin{equation*}
           \begin{array}{c}
             Xh(x)=\langle \nabla h(x),X(x)\rangle=\langle (0,\dots,0,1),(f_1(x),\dots,f_{n-1}(x),f_n(x))\rangle=f_n(x),  \\
              X^2h(x)=\langle\nabla Xh(x),X(x)\rangle= \langle (f_n^1(x),\dots ,f_n^{n-1}(x),0),(f_1(x),\dots,f_{n-1}(x),f_n(x))\rangle= \sum_{i=1}^{n-1}f_i(x)f_n^i(x), \\
              X^3h(x)=\langle\nabla X^2h(x),X(x)\rangle= \sum_{j=1}^{n-1}\sum _{i=1}^{n-1}f_j(x)(f_i(x)f_n^i(x))^j,\\
              \dots
           \end{array}
       \end{equation*}
    Suppose $m=2$, then $p$ is a fold regular point. The contact between $\widetilde{Z}$ and $\mathcal{S}_{X,m}$ at $p$ is given by: 
    $$\langle \widetilde{Z}(p),\nabla Xh(p)\rangle=\langle (f_1(p),\dots,f_{n-1}(p),0),(f_n^1(x),\dots ,f_n^{n-1}(x),0)\rangle=X^2h(p)\ne 0.$$
    Then $\widetilde{Z}$ is transversal to $\mathcal{S}_{X,m}$ at $p$. 
    The argument is analogous for arbitrary $m>2$, since $$\langle \widetilde{Z}(p),\nabla X^{m-1}h(p)\rangle=X^mh(p)\ne 0.$$
   \end{proof}

\subsection{Conley Index for Filippov Flows}\label{conleyindex}

In order to study the qualitative dynamics of Filippov systems, we make use of the Conley index theory for multivalued dynamical systems introduced by Marian Mrozek in \cite{mrozek1990cohomological}.

 Let $X$ be a locally compact metric space and consider $\pi$ a multi-valued flow defined on $X$. 
\begin{definition}
    A set $S \subset X$ is called \textbf{invariant} under the multi-valued flow $\pi$ if for each $x \in S$ there exists an orbit $\varphi:\mathbb{R}\rightarrow S$ such that $\varphi \in \mathcal{S}_{\mathbb{R}}(x)$.
\end{definition}

Given a set $U$ we define the following invariant subsets of $U$ under the multi-valued flow $\pi$ as:
\begin{equation*}
    \begin{array}{lcl}
       Inv(U)&:=  & \{x \in U : \exists \mbox{ orbit }\varphi:\mathbb{R}\rightarrow U\mbox{ such that }\varphi \in \mathcal{S}_{\mathbb{R}}(x)\}, \\
       Inv^+(U)&:=  & \{x \in U : \exists \mbox{ orbit }\varphi:\mathbb{R}^+\rightarrow U\mbox{ such that }\varphi \in \mathcal{S}_{\mathbb{R}^+}(x)\}, \\
       Inv^-(U)&:= & \{x \in U : \exists \mbox{ orbit }\varphi:\mathbb{R}^-\rightarrow U\mbox{ such that }\varphi \in \mathcal{S}_{\mathbb{R}^-}(x)\}.
    \end{array}
\end{equation*}

\begin{definition}
    A compact set $N\subset X$ is called an \textbf{isolating neighborhood} if its maximal invariant set lies in its interior, that is, $Inv(N)\cap \partial N=\emptyset$.
\end{definition}
Now, we define an isolated invariant set as usual:
\begin{definition}
    A set $S\subset X$ is called an \textbf{isolated invariant set} if there is an isolating neighborhood $N$ such that $S=Inv(N)$.
\end{definition}

\begin{definition}
    The set $U \subseteq X$ is called \textbf{strongly (positively) invariant} with respect to $\pi$ if, and only if, for each $x \in U$, $\pi(x,t)\subseteq U \mbox{ }\forall t \in \mathbb{R}$ ($\pi^+(x):=\pi(x,t)\subseteq U \mbox{ } \forall t \in \mathbb{R}^+$).
\end{definition}

 Since we are interested in the study of the behavior close to an isolated invariant set, we define the \textbf{restricted multi-valued flow}. Assuming $N$ compact, take 
 \begin{equation*}
     \begin{array}{ccl}
         \pi|_N: &N\times \mathbb{R}&\twoheadrightarrow N \cap \pi(x,\mathbb{R})\\ 
         &(x,t)&\mapsto \{y \in N : Conn_N(t,x,y,\pi)\ne \emptyset\} 
     \end{array}
 \end{equation*}
Since $Conn_N(t,x,y,\pi)\subset N$, where the trajectories are considered only up to the time at which they leave $N$, even if they return at a later time. In \cite{mrozek1990cohomological} Mrozek has proved that $\pi|_N$ is a partial multi-valued flow.

\begin{definition}\label{def18}
  A pair of subsets $(P,Q)$\footnote{We may observe that $(P,Q)$ is not necessarily a topological pair of spaces.} of an isolating neighborhood $N$ is said to be an \textbf{index pair in $N$} with respect to $\pi$ if:
    \begin{itemize}
        \item $P$ and $Q$ are compact and strongly positively invariant with respect to $\pi|_N$;
        \item $Inv^-(N)\subset int(P,N)$;
        \item $Inv^+(N)\subset N \setminus Q$;
        \item $\overline{P\setminus Q}\subset int(N)$.
    \end{itemize}
\end{definition}
\begin{remark}
    The index pair defined above is also an index pair in the Conley's sense.
\end{remark}

We now introduce the definition of the Conley index of an isolated invariant set for admissible multi-valued flows.
\begin{definition}
    If $S$ is an isolated invariant set with isolating neighborhood $N$, then the \textbf{homological Conley index} of $S$ is \begin{equation}
        \mathcal{C}(S):=H_*(P,P\cap Q),
    \end{equation} where $(P,Q)$ is an index pair in $N$ with respect to the multi-valued flow as \autoref{def18}.
\end{definition}
 Furthermore, the following equivalence holds \cite{mrozek1990cohomological}:
 $$H_*(P,Q) \cong H_*(Inv^-(N),Inv^-(N) \cap Q).$$

\begin{remark}
Mrozek emphasizes that, in the single-valued case, every index pair as given in \ref{def18} is also an index pair in the sense of Conley, see \cite{mrozek1990cohomological} Corollary 4.2.
\end{remark}

\section{Conley Index of Filippov Limit Cycles}\label{sec:CILC}

In Filippov systems, limit cycles can be classified according to their interaction with the discontinuity surface $\Sigma$. In this paper, we focus on the study of the following types:
\begin{enumerate}
    \item \textbf{Smooth Limit Cycles}\\ 
    The orbit remains entirely in a smooth region, either $X^+$ or $X^-$, and does not intersect the discontinuity surface $\Sigma$. 
    Stability and Morse index are defined as in classical smooth systems.

    \item \textbf{Crossing Limit Cycles} \\
    These are isolated periodic orbits (in the set of periodic orbits) that intersects the switching surface $\Sigma$ only at crossing points. These cycles are the closest analogues to classical smooth limit cycles. 
    
    \item \textbf{Sliding Limit Cycles} \\
    Limit cycles of the sliding vector field $\widetilde{Z}$, that is, cycles contained in the sliding or escaping regions of $\Sigma$.

    \item \textbf{Tangency Limit Cycles}\\
    The orbit tangentially touches $\Sigma$ at one or more points and these contacts are visible. In this case, the orbit does not need to be isolated within the set of periodic orbits.

    \item \textbf{Hybrid Limit Cycles} \\
    The trajectory combines different behaviors: crossing, sliding, and tangential points.

\end{enumerate}

\subsection{Crossing Limit Cycles in Dimension n}\label{subsec:crossing}

    Observe that when we consider only crossing limit cycles, the number of intersections of this trajectory and the discontinuity surface is even. For simplicity, we consider the case where the number of intersections is equal to $2$, see \autoref{fig:crossing_example}. The other cases follow with the same arguments.

\begin{figure}[h!]
    \centering
    \includegraphics[width=10cm]{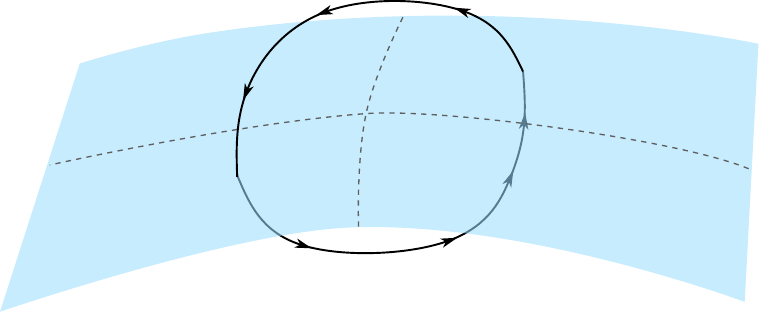}
    \put(-87,87){\small $p$}
    \put(-200,45){\small $q$}
    \put(-30,90){\small $\Sigma^c$}
    \put(-250,15){\small $\Sigma^c$}
    \put(-30,15){\small $\Sigma^{e,s}$}
    \put(-250,90){\small $\Sigma^{e,s}$}
    \caption{A crossing limit cycle in dimension 3.}
    \label{fig:crossing_example}
\end{figure}

Since the intersections of a crossing limit cycle $\gamma$ with the switching manifold $\Sigma$ are transversal, the local dynamics near $\gamma$ are well defined on both sides of $\Sigma$. Consequently, it follows that an isolating neighborhood $N$ of $\gamma$ can be constructed such that the intersection $N \cap \Sigma$ still consists solely of crossing points. In other words, the transversality condition guarantees that no sliding regions appear inside $N$, and the qualitative behavior of the trajectories in the neighborhood of $\gamma$ remains unchanged.

\subsubsection*{Construction of the first return map}
  Let $\gamma$ be a crossing limit cycle for a Filippov system which intersects the discontinuity surface at the points $p,q \in \Sigma^c$. For $r>0$ sufficiently small, take $ B_r(p) \subset \Sigma^c$ a neighborhood of $p$ in $\Sigma^c$ and consider that $Xh(p)>0, Yh(p)>0$ (the other case is similar, just reversing the inequalities). As $r > 0$ is arbitrary, we may choose such a neighborhood in which every point of $B_r(p)$ is transversal and constitutes a crossing point of $\Sigma$. Furthermore, for each $x \in B_r(p)$ it holds that $Xh(x) > 0$ and $Yh(x) > 0$. 

  \begin{figure}[h!]
    \centering
    \includegraphics[width=11cm]{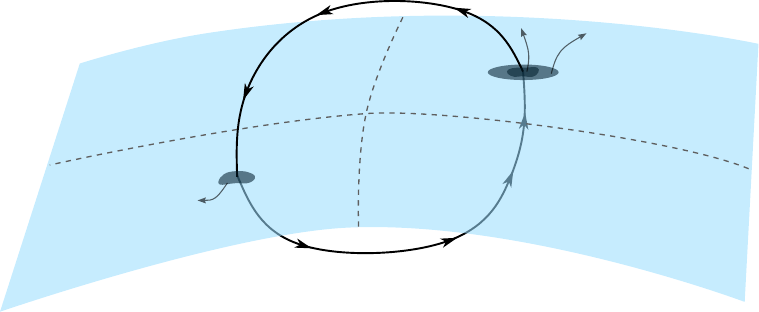}
    \put(-96,89){\scriptsize $p$}
    \put(-219,49){\scriptsize $q$}
    \put(-20,100){\scriptsize $\Sigma^c$}
    \put(-280,25){\scriptsize $\Sigma^c$}
    \put(-30,25){\scriptsize $\Sigma^{e,s}$}
    \put(-270,100){\scriptsize $\Sigma^{e,s}$}
    \put(-70,115){\small $U$}
    \put(-103,119){\small $W$}
    \put(-240,43){\small $V$}
    \caption{Construction of the first return map.}
    \label{firstreturn}
\end{figure}

  Now, since $\gamma$ is a close orbit, there exists $t_0>0$ such that $\varphi_X(t_0,p)=q\in \Sigma^c$. So, define the following function 
  \begin{equation}
      \begin{array}{ccl}
         f:\mathbb{R} \times B_r(p)  &\rightarrow &\mathbb{R}  \\
          (t,x)& \mapsto & h(\varphi_X(t,x))
      \end{array}
  \end{equation}
and observe that $f(t_0,p)=h(\varphi_X(t_0,p))=h(q)=0$ since $q \in \Sigma$. Furthermore, $\frac{\partial}{\partial t}f(t,p)=\nabla h(\varphi_X(t_0,p))\cdot X(q)=\nabla h(q)\cdot X(q)\ne 0$ since $q$ is a crossing point. Then, by the implicit function theorem, there exists a function $\tau:U\subset B_r(p)\rightarrow \mathbb{R}$ such that $\tau (p)=t_0$ and $f(\tau (x), x)= 0$ for all $x \in U$. So for all $x \in U$ the flow of the vector field $X$ returns and is transversal to the discontinuity surface $\Sigma$. 

Proceeding in the same way, consider the open neighborhood $\varphi_X(\tau(U),U)\ni q$. There exists $t_1>0$ such that $\varphi_Y(t_1,q)=p \in \Sigma^c$. We define the function:
  \begin{equation}
      \begin{array}{ccl}
         \tilde{f}:\mathbb{R} \times \varphi_X(\tau(U),U)  &\rightarrow &\mathbb{R}  \\
          (t,x)& \mapsto & h(\varphi_Y(t,x))
      \end{array}
  \end{equation}
and, again, we have $\tilde{f}(t_1,q)=h(\varphi_Y(t_1,q))=h(p)=0$ and $\frac{\partial}{\partial t}\tilde{f}(t,q)=\nabla h(p)\cdot Y(p)\ne 0$. Then there exists a function  $\tilde{\tau}:V\subset\varphi_X(\tau(U),U)\rightarrow \mathbb{R}$ such that $\tilde{\tau} (q)=t_1$ and $\tilde{f}(\tilde{\tau} (x), x)= 0$ for all $x \in V$.
Denote $W:=\varphi_Y(\tilde{\tau}(V),V)\subset \Sigma$. Therefore, it is possible to define a first return map $P:B_r(p)\rightarrow W$ by concatenating the flows $P(x)=\varphi_Y(\tilde{\tau}(\varphi_X(\tau(x),x)),\varphi_X(\tau(x),x)).$ See \autoref{firstreturn}

\begin{definition}[First return map]
    Let $\gamma$ be a crossing limit cycle intersecting $\Sigma^c$ at $p \in \gamma$. Consider a local transversal section $\ell$ through $p$, with $\ell \subset \Sigma^c$.
    The \emph{first return map} (or Poincaré map) associated to $\gamma$ is the map
    \[
        P : U \subset \ell \to \ell, \quad P(x) = \varphi_Z(T(x), x),
    \]
    where $U$ is a neighborhood of $p$ in $\ell$, $\varphi_Z(t,x)$ is the Filippov flow (the concatenation of the flows of both vector fields $X$ and $Y$, 
    and $T(x)$ is the first return time of the trajectory starting at $x \in U$ to the section $\ell$.
    By construction, $P(p) = p$.
\end{definition}
By restricting our analysis exclusively to crossing regions, the first return map becomes well defined and single-valued, in contrast to the multivalued maps that may arise in the general nonsmooth setting. As in the classical theory, the stability of the periodic orbit $\gamma$ is characterized by the derivative of the Poincaré map $P$:

\begin{definition}[Hyperbolic periodic orbit]
    Let $\gamma$ be a crossing periodic orbit of a Filippov system with associated return map $P$. 
    We say that $\gamma$ is a \emph{hyperbolic periodic orbit} if the derivative of $P$ at $p \in \gamma$ satisfies
    \[
        |P'(p)| \neq 1.
    \]
    In this case, we further classify:
    \begin{itemize}
        \item If $|P'(p)| < 1$, then $\gamma$ is an \emph{stable} limit cycle;
        \item If $|P'(p)| > 1$, then $\gamma$ is a \emph{unstable} limit cycle.
    \end{itemize}
\end{definition}
\begin{definition}[Orientable Crossing Periodic Orbit]
Let $\gamma$ be a crossing periodic orbit of a Filippov vector field, and let $P : \ell \to \ell$ be the first return map as defined above. Then $\gamma$ is said to be \emph{orientable} if 
$$\det(DP(p)) > 0.$$
\end{definition}
\begin{definition}[Morse Index of a Crossing Periodic Orbit in a Filippov System]
Let $\gamma$ be a crossing periodic orbit of a Filippov vector field, and let $P : \ell \to \ell$ the first return map associated with $\gamma$.
The \emph{Morse index} of $\gamma$ is defined as
$$\mathrm{ind}_M(\gamma) := \dim E^u,$$
where $E^u$ is the unstable subspace of the derivative $DP(p)$ evaluated at a point $p \in \gamma \cap \ell$, i.e.~the span of the eigenvectors 
corresponding to eigenvalues $\lambda$ with $|\lambda| > 1$.
\end{definition}

As in the classical setting, the following result holds.

\begin{proposition}
    Given a crossing limit cycle $\gamma$ of the Filippov system \eqref{first}. Assume that $\gamma$ is orientable and hyperbolic with Morse index $\mathrm{ind}_M(\gamma)=k$. Then $$\mathcal{C}(\gamma)\cong \widetilde{H}_*( S^k\vee S^{k+1}).$$
\end{proposition}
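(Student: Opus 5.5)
The plan is to reduce to the classical smooth computation for a hyperbolic periodic orbit. Near a crossing limit cycle the Filippov flow is single-valued, and it is conjugate to a smooth flow near a periodic orbit. First, I choose an isolating neighborhood $N$ of $\gamma$ as in the discussion preceding the construction of the first return map. I take $N$ to be a thin tubular neighborhood of $\gamma$ with $N\cap\Sigma\subset\Sigma^c$. On such an $N$ every point of $\Sigma$ is a transversal crossing point, so $\mathcal{F}_Z$ generates a unique solution through each point of $N$. The multivalued flow $\pi|_N$ is therefore a genuine (partial) single-valued flow. By the remark following Definition~\ref{def18} (Mrozek, Corollary 4.2), index pairs in the sense of Definition~\ref{def18} are then Conley index pairs. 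The Mrozek index therefore coincides with the classical homological Conley index.

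Next, I build the index pair explicitly from the return map $P:U\subset\ell\to\ell$, where $\ell\subset\Sigma^c$ has dimension $n-1$. Hyperbolicity gives a splitting $T_p\ell=E^u\oplus E^s$ with $\dim E^u=k$. By the Hartman--Grobman theorem (or by a direct choice of adapted norms), there is a box $B=D^u\times D^s\subset\ell$ around $p$ on which $P$ expands the $D^u$ direction and contracts the $D^s$ direction. I then saturate $B$ by the Filippov flow for times $t\in[0,T(x)]$, which produces a solid tube $N$ around $\gamma$. In this tube I take $P_0=N$ and let $Q_0$ be the part of $\partial N$ of the form $\partial D^u\times D^s$, swept by the flow. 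This is the exit set. I would check the four conditions of Definition~\ref{def18} directly: compactness, strong positive invariance relative to $\pi|_N$, and containment of $Inv^\pm(N)$. These reduce to the expansion and contraction estimates for $P$ on $B$, together with transversality at the crossing points, which guarantees that orbits cannot slide along $\Sigma$ inside $N$.

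Then I compute $H_*(N,Q_0)$. The pair is a mapping torus of $(D^u\times D^s,\partial D^u\times D^s)$ under $P$, and this deformation retracts to the mapping torus of $(D^k,S^{k-1})$ under the linear map $DP(p)|_{E^u}$. Orientability ($\det DP(p)>0$) lets me homotope the gluing map to the identity on $(D^k,S^{k-1})$. I intend orientability to apply to the unstable block; if needed, I interpret it as the unstable bundle being trivial, and I would flag this reading as an assumption. The quotient $N/Q_0$ then becomes $(S^1\times D^k)/(S^1\times S^{k-1})\simeq S^1_+\wedge S^k\simeq S^k\vee S^{k+1}$. Hence $H_*(N,Q_0)\cong\widetilde H_*(S^k\vee S^{k+1})$, and the formula $H_*(P,Q)\cong H_*(Inv^-(N),Inv^-(N)\cap Q)$ shows that this does not depend on the choice of pair.

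The main obstacle is the verification in the second step, that is, that the pair is really an index pair for the multivalued flow $\pi|_N$. One must rule out additional Filippov solutions, for instance solutions that stop on $\Sigma$ or slide, which could break strong positive invariance. I would handle this with a compactness argument: $Xh$ and $Yh$ have a uniform sign on $N\cap\Sigma$, so $\mathcal{F}_Z(x)$ points strictly across $\Sigma$ there. Every solution in $N$ is then a concatenation of $X$- and $Y$-orbits, and uniqueness follows.
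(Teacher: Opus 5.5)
Your first step is sound, and more careful than the paper. On a thin tube with $N\cap\Sigma\subset\Sigma^c$ the Filippov solutions are unique, and by Mrozek's Corollary~4.2 you may compute with any classical index pair.

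\textbf{The gap is in your second step.} Write $B=\{|u|\le a,\ |s|\le b\}$. The set $N=\bigcup_{x\in B}\varphi([0,T(x)],x)$ obtained by flow-saturating a section box is not an isolating block, and $Q_0=\bigcup_{x\in\partial D^u\times D^s}\varphi([0,T(x)],x)$ is not its exit set.
\begin{itemize}
\item The lateral boundary of a flow-saturated tube consists of orbit arcs, so the flow is tangent to it, not outward.
\item The two ends of the tube are $B$ and $P(B)\neq B$, and orbits actually leave $N$ through $P(B)\setminus B$. For $P(u,s)=(2u,s/2)$, any $x\in B$ with $a/2<|u|<a$ leaves $N$ just after time $T(x)$ without ever meeting $Q_0$, so the exit-set condition fails.
\item For the same reason $N$ is not a mapping torus of $(D^u\times D^s,\partial D^u\times D^s)$ under $P$. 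That object is not even defined, since $P(B)\not\subset B$.
\end{itemize}
This changes the answer. In the planar repelling case ($n=2$, $k=1$, $P(u)=2u$), your $N$ deformation retracts onto $\gamma$ and $Q_0$ is two arcs. Hence $H_1(N,Q_0)\cong\mathbb{Z}^2$ and $H_2(N,Q_0)=0$, instead of $\widetilde H_*(S^1\vee S^2)$. The expansion and contraction estimates for $P$ on $B$ cannot repair this, because they say nothing about transversality of $\partial N$ to the continuous-time flow. So the obstacle you single out (extra Filippov solutions) is not the real one.

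\textbf{What you need} is a block whose boundary is transverse to the flow, with strict exit along the unstable directions and strict entrance along the stable ones. This is exactly what the paper posits when it takes $N\simeq D^k\times S^1\times D^{n-k-1}$ with inward-pointing stable boundary. Two ways to obtain it:
\begin{itemize}
\item Make your first-sentence claim precise. Near $\gamma$ the Filippov flow is topologically equivalent to the suspension of the $C^r$ return map, hence to a smooth flow near a hyperbolic periodic orbit. There the standard round-handle block exists, and the Conley index is invariant under such an equivalence.
\item Start from the discrete index pair of $P$ and use the mapping-torus relation between the discrete index of a Poincar\'e map and the flow index.
\end{itemize}
With a genuine block, use the classical pair $(N,N^-)$ through your first step, rather than claiming Definition~\ref{def18} directly. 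For $P_0=N$ and $Q_0\subset\partial N$ one has $\overline{P_0\setminus Q_0}=N\not\subset\operatorname{int}(N)$, so its last condition fails. Your third step then goes through, with monodromy homotopic to $DP(p)$ rather than $P$ itself.

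Your caveat on orientability is justified. The condition $\det DP(p)>0$ does not make the unstable bundle trivial: for $n=3$ with eigenvalues $-2,-\tfrac12$, the index is $\mathbb{Z}/2$ in degree $1$. The paper's proof tacitly uses the same stronger reading when it writes $Inv^-(N)\simeq D^k\times S^1$.
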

\begin{proof}

Since $\gamma$ is a crossing cycle, the Filippov dynamics coincides locally with a smooth vector field as discussed above. Because $\gamma$ is hyperbolic, the invariant set $S=\{\gamma\}$ is isolated. Thus there exists a compact isolating neighborhood $N$ of $\gamma$ such that $\mathrm{Inv}(N)=\gamma.$

By hyperbolicity and orientability, $N$ can be chosen homeomorphic to
$$
N \simeq D^{k} \times S^1 \times D^{n-k-1}.
$$

For a hyperbolic periodic orbit of Morse index $k$, the set $Inv^-(N)$ coincides with the local unstable manifold of $\gamma$ inside $N$, hence
$$\mathrm{Inv}^-(N) \simeq D^{k} \times S^1.$$

Let $P$ be a compact neighborhood of $\mathrm{Inv}^-(N)$ such that
$$ \mathrm{Inv}^-(N) \subset \mathrm{int}(P) \subset P \subset N,$$
and define $$Q := P \cap (\partial N\times I_\epsilon),$$ where $I_\epsilon=[-\epsilon,\epsilon]$ and $\epsilon>0$ sufficiently small.
We verify that $(P,Q)$ is an index pair.

\textit{(i) Compactness.}
$P$ is compact by construction and $Q$ is a closed subset of the compact set $P$, hence compact.

\textit{(ii) Strong positive invariance.}
We prove that $P$ is strongly positively invariant in $N$.

Let $x\in P$ and suppose that a trajectory $\varphi([0,t],x)$ satisfies
\[
\varphi([0,t],x)\subset N.
\]
We must show that $\varphi([0,t],x)\subset P$.

Assume by contradiction that there exists a first time
\[
t_0=\inf\{s>0:\varphi(s,x)\notin P\}.
\]
Then
\[
\varphi([0,t_0),x)\subset P
\quad\text{and}\quad
\varphi(t_0,x)\in \partial P.
\]

Since $P\subset N$, we have $\varphi(t_0,x)\in N$.  
Near $\gamma$ the flow admits the hyperbolic product structure
\[
N \simeq D^k \times S^1 \times D^{n-k-1},
\]
where $D^k$ corresponds to unstable directions and $D^{n-k-1}$ to stable directions.  
Moreover,
\[
\mathrm{Inv}^-(N)\simeq D^k \times S^1
\]
is positively invariant while trajectories remain in $N$.

The neighborhood $P$ is chosen as a sufficiently small compact of 
$\mathrm{Inv}^-(N)$ inside $N$, that is, a product neighborhood in the stable directions.  
Along the boundary components corresponding to the stable directions, the vector field is inward pointing for positive time. Hence trajectories cannot exit $P$ through those boundary components.

Therefore the only possible way to leave $P$ while remaining in $N$ would be through the portion of $\partial P$ contained in $\partial N$, namely $Q=P\cap \partial N$. But this contradicts the assumption that $\varphi([0,t],x)\subset N$ and has not reached $\partial N$ before $t_0$.

Hence $\varphi([0,t],x)\subset P$, and $P$ is strongly positively invariant in $N$. The argument to prove that $Q$ is also strong positive invariant is the same.

\textit{(iii) $\mathrm{Inv}^-(N)\subset \operatorname{int}(P,N)$.}
This holds by construction.

\textit{(iv) $\mathrm{Inv}^+(N)\subset N\setminus Q$.}
Every point of $Q$ lies close enough to $\partial N$ and is an exit point of the isolating block. Hence no trajectory starting at $Q$ remains in $N$ for positive time, so $Q\cap \mathrm{Inv}^+(N)=\emptyset$.

\textit{(v) Isolation property.}
Since $\mathrm{Inv}(N)=\gamma$ and $\mathrm{Inv}^-(N)\subset \operatorname{int}(P)$, we have
\[
\gamma \subset \operatorname{int}(P\setminus Q),
\quad\text{and}\quad
\overline{P\setminus Q}\subset \operatorname{int}(N).
\]

Therefore $(P,Q)$ is an index pair for $\gamma$.

By definition, the Conley index is the homotopy type 
$$
\mathcal{C}(\gamma) := H_*(P,Q).
$$

Since $P$ is a compact neighborhood of $\mathrm{Inv}^-(N) \simeq D^{k} \times S^1$ inside $N \simeq D^{k} \times S^1 \times D^{n-k-1}$, and $Q \simeq P \cap \partial N$ it is a good pair, then $H_*(P,Q) \cong \widetilde{H}_*(P/Q)$. 

Finally, the quotient space is homotopy equivalent to
$$P/Q \simeq  \frac{D^k \times S^1}{\partial D^k \times S^1}.$$
Using standard homotopy equivalences, we have:
\[
\frac{D^k \times S^1}{\partial D^k \times S^1} \simeq S^k \vee S^{k+1}.
\]

Hence,
\[
\mathcal{C}(\gamma) \cong \widetilde{H}_*(S^k \vee S^{k+1}),
\]
as claimed.

\end{proof}

\subsection{Sliding Limit Cycles in Dimension n}\label{subsec:sliding}
In this case, we are considering only limit cycles $\gamma$ in the sliding vector field $\widetilde{Z}$. Observe that, since this orbit lives in the sliding/escaping region, we have the property that $Xh(p)Yh(p)<0$ for all $p \in \gamma$. Thus, the behavior of such an orbit as a cycle of the Filippov system can only attract or repel the regular vector fields $X$ and $Y$.
See \autoref{fig:puresliding_N}.

\begin{figure}[h!]
    \centering
    \includegraphics[width=15cm]{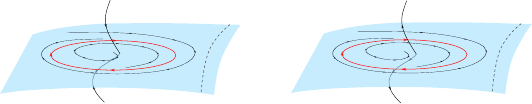}
    \put(-410,15){\small $\Sigma^s$}
    \put(-180,15){\small $\Sigma^e$}
    \put(-272,0){\small $\partial \Sigma^s$}
    \put(-38,0){\small $\partial \Sigma^e$}
    \put(-285,35){\small \textcolor{red}{$\gamma$}}
    \put(-52,35){\small \textcolor{red}{$\gamma$}}
    \caption{Limit cycle $\gamma$ contained the sliding and escaping regions, respectively.}
    \label{fig:puresliding_N}
\end{figure}

The first return map is defined then as the first return map given by the sliding vector field and we characterize $\gamma$ with respect to the behavior of the regular vector fields $X$ and $Y$.
So we have the following result:
\begin{proposition}
    Let $\gamma \in \Sigma$ be an orientable and hyperbolic limit cycle of the vector field $\widetilde{Z}$ with index $ind_M(\gamma)=k$ (with respect to $\widetilde{Z}$). Then the Conley index of $\gamma$ in the Filippov vector field is: $\mathcal{C}(\gamma)\cong \widetilde{H}_*(S^{k+2}\vee S^{k+1})$ if $\gamma \in \Sigma^e$ and $\mathcal{C}(\gamma)\cong \widetilde{H}_*(S^{k+1}\vee S^k)$ if $\gamma \in \Sigma^s$.
\end{proposition}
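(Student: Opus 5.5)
The plan is to reduce the computation to the previous proposition (the crossing case) by decomposing a neighborhood of $\gamma$ into directions tangent to $\Sigma$ and the single direction normal to $\Sigma$.

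\emph{Local product structure.} Since $\gamma\subset\Sigma^s$ (resp.\ $\Sigma^e$) is compact and $Xh\,Yh<0$ on $\gamma$, I choose a tubular neighborhood $N_\Sigma\subset\Sigma^{s}$ (resp.\ $\Sigma^e$) of $\gamma$ in $\Sigma$. On $N_\Sigma$, the hyperbolicity and orientability of $\gamma$ for $\widetilde Z$ (a limit cycle in the $(n-1)$-manifold $\Sigma$) give
\[
N_\Sigma\simeq D^k\times S^1\times D^{n-2-k},
\]
exactly as in the crossing proposition. I then thicken in the normal direction and set
\[
N=\{\,x: \pi(x)\in N_\Sigma,\ |h(x)|\le\delta\,\},
\]
where $\pi$ is a projection along a transverse direction (for instance along the flows of $X$ and $Y$). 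For $\delta$ small, $Xh<0<Yh$ throughout $N$ in the sliding case, so every trajectory in $N\setminus\Sigma$ reaches $\Sigma$ in forward time and leaves $N$ through $\{|h|=\delta\}$ in backward time. The escaping case is the same with time reversed.

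\emph{Invariant sets and index pair.} In the sliding case:
\begin{itemize}
\item $\mathrm{Inv}(N)=\gamma$, since any full orbit must lie in $\Sigma$ and therefore be a full $\widetilde Z$-orbit in $N_\Sigma$.
\item $\mathrm{Inv}^-(N)=W^u_{\widetilde Z}(\gamma)\cap N_\Sigma\simeq D^k\times S^1$, because backward solutions cannot stay off $\Sigma$.
\item $\mathrm{Inv}^+(N)$ is the normal thickening of $W^s_{\widetilde Z}$. Here the multivaluedness enters: a solution leaving $\Sigma$ in forward time is impossible on $\Sigma^s$, so this set is genuinely a thickening.
\end{itemize}
I take $P=N$ truncated to a small neighborhood of $\mathrm{Inv}^-$ in the stable $\Sigma$-directions, and $Q=P\cap\pi^{-1}(\partial D^k\times S^1\times D^{n-2-k})$. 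The normal faces $|h|=\delta$ are entrance faces, so they do not belong to $Q$. The four conditions of \autoref{def18} are then verified as in the crossing case, with one extra point: strong positive invariance of $P$ must hold for all Filippov solutions. This is true because on $\Sigma^s$ the only forward solution is the sliding one.

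The resulting pair satisfies $P/Q\simeq (D^k\times S^1\times D^1)/(\partial D^k\times S^1\times D^1)\simeq (D^k\times S^1)/(\partial D^k\times S^1)$, which gives $\widetilde H_*(S^k\vee S^{k+1})$.

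In the escaping case, the normal faces $\{|h|=\delta\}$ become exit faces and $\mathrm{Inv}^-(N)$ becomes $D^k\times S^1\times D^1$, since backward solutions from points off $\Sigma$ reach $\Sigma$ and then slide. So
\[
Q\supset P\cap\{|h|=\delta\},
\]
and the unstable disk becomes $D^{k+1}$, with the normal interval contributing one unstable dimension. Then
\[
P/Q\simeq (D^{k+1}\times S^1)/(\partial D^{k+1}\times S^1)\simeq S^{k+1}\vee S^{k+2}.
\]

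\emph{Main obstacle.} The hardest step is checking the index-pair axioms in the escaping case, where solutions are nonunique. A point of $\Sigma^e$ has three forward solutions: one sliding, and one leaving into each side. So $P$ must contain the two-sided normal thickening, and $Q$ must contain both normal faces, for strong positive invariance to hold. I would check this by showing that every branch of the inclusion is monotone in $|h|$ off $\Sigma$. This monotonicity follows from $Xh>0$ and $Yh<0$ near $\gamma$.
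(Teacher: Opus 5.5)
Your proposal follows essentially the same route as the paper's proof. Both use a product neighborhood $N=N_\Sigma\times I$, with the normal interval acting as a stable direction over $\Sigma^s$ (so $\mathrm{Inv}^-(N)\simeq S^1\times D^k$) and as an extra unstable direction over $\Sigma^e$ (so $\mathrm{Inv}^-(N)\simeq S^1\times D^{k+1}$), followed by the same quotient computation; your handling of the projection along the $X$/$Y$ flows and of the branching of Filippov solutions is more explicit than the paper's.

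One detail needs fixing, which the paper also glosses over. Mrozek's condition $\overline{P\setminus Q}\subset \mathrm{int}(N)$ requires $Q$ to be a collar of the exit faces rather than the faces themselves. In the sliding case you should also shrink $P$ to $\{|h|\le\delta'\}$ with $\delta'<\delta$; this is still strongly positively invariant since $|h|$ decreases along solutions, and without it the entrance faces $\{|h|=\delta\}\subset\partial N$ lie in $\overline{P\setminus Q}$.
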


\begin{proof}
Let $\gamma\subset\Sigma$ be an orientable, hyperbolic limit cycle of the sliding vector field $\widetilde{Z}$ and denote by $ind_M(\gamma)=k$ its Morse index as a periodic orbit of $\widetilde{Z}$ considered on $\Sigma$ (so the classical Conley index of $\gamma$ for the sliding flow is homotopy equivalent to a wedge $S^{k}\vee S^{k+1}$).

Because $\gamma$ is hyperbolic for $\widetilde{Z}$ and lies in the sliding/escaping region we have $Xh(p)\,Yh(p)<0$ for every $p\in\gamma$. Choose a small tubular neighborhood of codimension 1 $N_\Sigma$ of $\gamma$ inside the switching manifold $\Sigma$ and a short normal interval $I$ transversal to $\Sigma$ so that
\[
N \;=\; N_\Sigma\times I
\]
is a compact neighborhood of $\gamma$. Observe that $N$ can be taken as an isolating neighborhood for the Filippov flow since the sign condition on $Xh$ and $Yh$ guarantees no other invariant set inside $N$.

We analyze the sets $Inv^\pm (N)$ according to whether $\gamma$ lies in the escaping region $\Sigma^e$ or in the sliding region $\Sigma^s$.

\medskip\noindent\textbf{(i) Escaping region $\Sigma^e$.}
In this case, both regular vector fields point away from $\Sigma$. For each $p\in N_\Sigma$, the normal direction along the interval $I$ adds one extra unstable (outgoing) direction: points on both sides of $\Sigma$ leave the neighborhood $N$ in positive time, while the trajectory lives in $N_\Sigma$ in negative time.
This implies that all points in this nromal direction dimension belongs to the set $Inv^-(N)$. 

Combining this with the classical stability of $\widetilde{Z}$ we have $$Inv^-(N) \simeq S^1 \times D^k \times I \simeq S^1 \times D^{k+1}\mbox{ and } Q\simeq Inv^-(N) \cap \partial N=S^1 \times \partial D^{k+1}, $$
which implies that $\mathcal{C}(\gamma)=H_*(P,Q)\simeq \widetilde{H}_*(P/Q)\simeq \widetilde{H}_*(S^{k+1}\vee S^{k+2}).$

\medskip\noindent\textbf{(ii) Sliding region $\Sigma^s$.}
In the sliding case, both regular vector fields point toward $\Sigma$. This means that the normal direction is contracting: nearby trajectories are attracted to $\Sigma$ instead of moving away from it.
 In this case all points in this extra dimension belongs to the set $Inv^+(N)$ since the flow enters the sliding region in positive time. Then $$Inv^-(N)\simeq S^1 \times D^k \mbox{ and } Q\simeq Inv^-(N) \cap \partial N =S^1 \times \partial D^k,$$ and $$\mathcal{C}(\gamma)=H_*(P,Q)\simeq \widetilde{H}^*(S^k \vee S^{k+1}).$$
\end{proof}

\subsection{Tangential Cycles in Dimension $2$ and $3$}\label{subsec:tangential}
In the case of tangential cycles, we are interested in tangency points of even order, since the trajectory must return to the discontinuity surface. In this section, we consider $\gamma$ to be a limit cycle of a Filippov vector field that is tangent to $\Sigma$ at a visible fold-regular point $p$, i.e., $Xh(p)=0$, $X^2h(p)>0$ and $Yh(p)\ne 0$.

In this scenario, even if $\gamma$ is an attractor for the vector field $X$, a small neighborhood of $\gamma$ can have an ``exit set'' for the Filippov flow depending on the sign of $Yh(p)$, as shown in \autoref{creat_exit}. We can see that while $\gamma$ is an attracting periodic orbit for the vector field $X$, in the Filippov context, all points in the darker shaded region (pink) belong to $Inv^-(N)$, where $N$ is a neighborhood of $\gamma$ represented by the lighter shaded region (blue).

\begin{figure}[H]
    \centering
    \includegraphics[width=12cm]{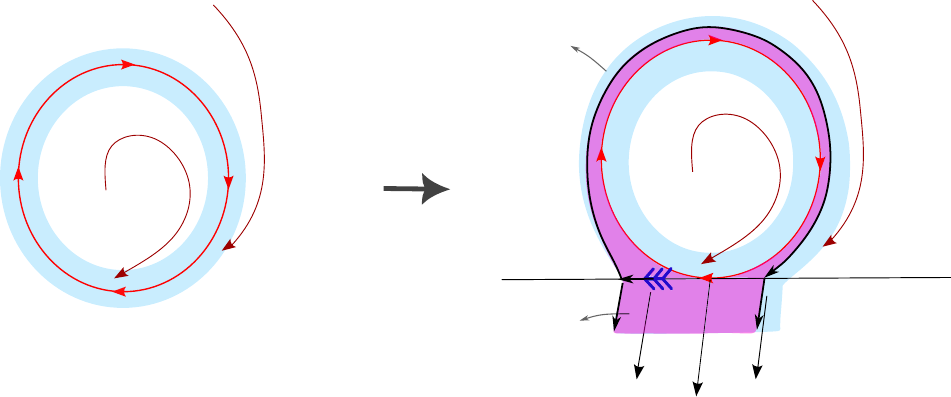}
    \put(-300,20){\small \textcolor{red}{$\gamma$}}
    \put(5,40){\small $\Sigma$}
    \put(-87,35){\small $p$}
    \put(-165,18){\small $Inv^-(N)$}
    \put(-146,125){\small $N$}
    \caption{Creation of an ``exit'' set in $N$.}
    \label{creat_exit}
\end{figure}

\begin{proposition}
Let $Z=(X,Y)\in \Omega_h^r(K,\mathbb{R}^n)$ be a Filippov vector field with $n\in\{2,3\}$. 
Let $\gamma$ be an orientable hyperbolic limit cycle of the vector field $X$, having a single visible fold-regular tangency with the switching manifold $\Sigma$ at a point $p$. Let $P$ be the first return map associated with the flow of $X$ around $\gamma$, and let $\lambda$ be the eigenvalue of $DP(p)$ whose eigenvectors generate a subspace normal to $\Sigma$ at $p$. Assume further that the sliding region near $p$ contains no pseudo-equilibria.
Then the Conley index of $\gamma$ in the Filippov system $Z$ is given by
\[
\mathcal{C}(\gamma) \cong
\begin{cases}
\widetilde{H}_*(S^k \vee S^{k+1}), & \text{if } (|\lambda|<1,\,Yh(p)>0)\ \text{or}\ (|\lambda|>1,\,Yh(p)<0),\\[1ex]
\widetilde{H}_*(*), & \text{if } (|\lambda|<1,\,Yh(p)<0)\ \text{or}\ (|\lambda|>1,\,Yh(p)>0),
\end{cases}
\]
where $k \in \{0, \dots, n-1\}$ is the Morse index of $\gamma$ with respect to the vector field $X$.
\end{proposition}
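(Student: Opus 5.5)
The plan is to reduce the tangential case to the computations already done for crossing cycles and, depending on the signs, to show that the tangency either leaves the index unchanged or destroys it. I would first fix a thin compact neighborhood $N$ of $\gamma$. Away from $p$ it is a tubular neighborhood $D^{k}\times S^1\times D^{n-k-1}$ of $\gamma$ for the smooth flow of $X$, built exactly as in the proof of the proposition on crossing limit cycles. Near $p$ it is a small box $B$ cut by $\Sigma$. Inside $B$ I would use the normal form of a visible fold-regular point: $h(x)=x_n$, $X$ locally $\partial_{x_1}$ plus a term quadratic in the normal direction, and $Y$ transverse to $\Sigma$ with $Yh(p)\ne 0$. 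Because $Xh$ changes sign across the fold curve $\mathcal{S}_X^+$ (a codimension-$2$ submanifold by Proposition~\ref{prop1}), $\Sigma\cap B$ splits into a crossing part and a sliding or escaping part. Which one appears is determined by the sign of $Yh(p)$. When $Yh(p)>0$ we get crossing on one side of the fold and sliding on the other; when $Yh(p)<0$ we get crossing and escaping. The hypothesis that there are no pseudo-equilibria guarantees that $\widetilde{Z}$ is nonvanishing on $B\cap\Sigma$. By the proposition on contact order with $m=2$, $\widetilde{Z}$ is transversal to the fold. So sliding orbits are simply carried through $B$ and exit across the fold or across $\partial B$, and no new invariant set is created.

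Next I would identify $Inv^\pm(N)$ in the four cases. Only the normal direction matters, namely the eigendirection of $\lambda$. In the two ``matching'' cases the extra Filippov behaviour happens on the side where orbits are already being pushed in the direction the smooth dynamics pushes them. With $|\lambda|<1$ and $Yh(p)>0$, the sliding region only sends orbits back onto $\gamma$. With $|\lambda|>1$ and $Yh(p)<0$, escaping only adds departing orbits, which already belong to the unstable side. In these cases $Inv(N)=\gamma$, and $Inv^-(N)$ is homeomorphic to $D^k\times S^1$, as for $X$. I would then take $P$ to be a compact neighborhood of $Inv^-(N)$ and $Q=P\cap\partial N$ as in the crossing proposition, and check strong positive invariance inside $B$ separately. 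The needed fact there is that sliding orbits stay in $P$ until they reach $\partial N$, which holds because $\widetilde Z$ crosses the fold transversally. The computation $P/Q\simeq (D^k\times S^1)/(\partial D^k\times S^1)\simeq S^k\vee S^{k+1}$ then goes through unchanged.

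In the two ``mismatched'' cases I would show that the Filippov solutions open a path that lets one side of the normal direction, together with $\gamma$ itself, reach $\partial N$ in forward time, or else fill it in backward time. This is the pink region of \autoref{creat_exit}. Concretely, take $|\lambda|<1$ and $Yh(p)<0$. Starting from points of $\gamma$ near $p$ there are solutions that follow $Y$ through the escaping region and leave $N$. Hence $\gamma$ is no longer isolated-invariant in the strong sense, or at least its Conley index is that of a set admitting an exit that collapses it. I would then choose an index pair with $P=Q$, or with $P$ deformation retracting onto $Q$, via a deformation sliding along the normal direction to the exit set through $Y$-orbits. This gives $H_*(P,Q)=\widetilde H_*(*)=0$. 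The case $|\lambda|>1$, $Yh(p)>0$ is handled by time reversal, with sliding in place of escaping.

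I expect the main obstacle to be exactly this mismatched case. The argument must show rigorously that the union of multivalued forward orbits makes the unstable set contractible relative to the exit set. It also requires verifying strong positive invariance of $P$ and $Q$ for $\pi|_N$ near the fold, where non-uniqueness is concentrated. I would first try to build $P$ in the $2$-dimensional slice transverse to $\gamma$ as a strip whose boundary is transverse to both $X$ and $\widetilde Z$. I would then extend it by the product structure in the remaining direction, which is available because $n\le 3$.
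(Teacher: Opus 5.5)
\textbf{There is a genuine gap in the mismatched cases.} Your treatment of the matching cases follows the paper's route: identify $Inv^-(N)$, take $P$ a compact neighborhood of it with $Q=P\cap\partial N$, and get $P/Q\simeq S^k\vee S^{k+1}$. The mismatched cases, however, rest on the wrong mechanism.

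\begin{itemize}
\item The fact you invoke is that from $p\in\gamma$ some Filippov solutions follow $Y$ and leave $N$. This depends only on $Yh(p)<0$. It holds equally in the matching case $|\lambda|>1$, $Yh(p)<0$, whose index is $\widetilde H_*(S^k\vee S^{k+1})\neq 0$. So it cannot be what kills the index.
\item $\gamma$ does not stop being isolated. Invariance in Mrozek's sense only requires one full solution through each point, and $Inv(N)=\gamma$ in all four cases.
\item An index pair with $P=Q$ is impossible. Since $\gamma\subset Inv^-(N)\cap Inv^+(N)$, the axioms force $\gamma\subset P\setminus Q$.
\end{itemize}
Everything therefore hinges on showing that $P$ deformation retracts onto $Q$, and that is exactly what you leave open.

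\textbf{The missing idea is the one-sided effect of the fold.} Your matching-case heuristics use it implicitly but never isolate it. The tangency alters only the \emph{exterior} side of $\gamma$ in the $\lambda$-direction, i.e.\ the side whose $X$-orbits meet $\Sigma$ near $p$. There the behaviour is fixed by the sign of $Yh(p)$ alone, because $\widetilde Z$ extends to $X$ on the fold and hence flows from $\{Xh<0\}$ towards $\{Xh>0\}$.
\begin{itemize}
\item If $Yh(p)>0$: exterior orbits hit $\Sigma^s$, slide to the fold and rejoin $X$. Backward, they reach $\Sigma^c$ and leave along $Y$. So this side behaves as a stable direction.
\item If $Yh(p)<0$: exterior points are reached from the fold by sliding along $\Sigma^e$. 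Forward, they cross $\Sigma^c$ and leave along $Y$. So this side behaves as an unstable direction.
\end{itemize}
In both cases this holds regardless of $\lambda$, while the interior side keeps the behaviour dictated by $\lambda$.

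In the mismatched cases the two sides disagree, so $\gamma$ is semistable in the normal direction. Then $Inv^-(N)$ is $\gamma\times D^j$ thickened on one side only, where $j$ is the number of unstable directions tangent to $\Sigma$. This gives an index pair of the form $\bigl(S^1\times D^j\times[0,1],\,S^1\times(\partial D^j\times[0,1]\cup D^j\times\{1\})\bigr)$, which has trivial homology. In the plane this is the paper's $H_*(S^1\times D^1,S^1)=0$.

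The retraction of $P$ onto $Q$ exists precisely because one normal side is excluded from $Inv^-(N)$. It runs across the other normal collar to the corresponding boundary component of $N$, not along $Y$-orbits.
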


\begin{proof}
Denote by $\varphi_Z(t,x)$ the Filippov flow generated by $Z$.

\textbf{Local structure near the tangency.}

Since $p$ is a visible fold-regular point, that is, $Xh(p)=0$, $X^2h(p)>0$ and $Yh(p) \ne 0$, there exists a neighborhood $U$ of $p$ such that the switching manifold decomposes locally as
\[
\Sigma = \Sigma^c \cup \Sigma^{s/e} \cup \mathcal{S}_X^+,
\]
where
\begin{itemize}
    \item $\Sigma^c$ is the crossing region;
    \item $\Sigma^{s/e}$ is the sliding region if $Yh(p)>0$, or the escaping region if $Yh(p)<0$;
    \item $\mathcal{S}_X^+$ is the set of fold points of $X$, which is a point if $n=2$ and a smooth curve if $n=3$.
\end{itemize}
Since $Yh(p)\ne 0$ and $\widetilde{Z}$ does not admit an equilibrium point near $p$, then locally there exist no tangency points with respect to the vector field $Y$. Furthermore, $\widetilde{Z}$ is smooth and locally transversal to the set $\mathcal{S}_X^+$ in a neighborhood of $p$.

\textbf{Construction of the isolating neighborhood for the Filippov vector field.}

Since $\gamma$ is a hyperbolic periodic orbit of the vector field $X$ with Morse index $k$, there exists a tubular neighborhood of $\gamma$
$$N_0 \simeq S^1 \times D^k \times D^{n-k-1},$$
for $n=2,3$. This neighborhood is a round handle for the orbit: the factor $S^1$ corresponds to the central direction induced by the periodic orbit, while $D^k$ and $D^{\,n-k-1}$ represent, respectively, the unstable and stable transverse directions.

Because $\gamma$ intersects the switching manifold $\Sigma$ at the fold point $p$, we adapt this neighborhood to the Filippov dynamics.

Let $$N_X := N_0 \cap \{x\in K : h(x)\ge0\}.$$
Thus $N_X$ is the portion of the tubular neighborhood located on the region where the dynamics is governed by the vector field $X$. Geometrically this operation cuts the round handle along the switching manifold, so that part of $\Sigma$ becomes part of the boundary of the neighborhood $N_X$.

Observe that, in a small neighborhood of $\gamma$ in $N_X$, the orbits of the flow may either approach  $\gamma$, be repelled by $\gamma$ or even enter into $\gamma$. However, a comprehensive understanding of this behavior is only possible once we analyze the Filippov flow, as illustrated in \autoref{creat_exit}. 
Keeping this in mind, we can define $\mathcal{\widetilde{A}} \subset N_X$ in the following way.

 For $\epsilon > 0$ sufficiently small, define
\[
\xi := \mathcal{S}_X^+ \cap \overline{B}_\epsilon(p).
\]
$\xi$ is an $(n-2)$-dimensional submanifold of $\Sigma$ by \autoref{prop1}.
Consider the case $Yh(p) > 0$; the other case is analogous by reversing time.
Since $\widetilde{Z}$ is transversal to $\mathcal{S}_X^+$, for each $x \in \xi$, there exists $\tau_x > 0$ sufficiently small such that
\[
\alpha := \{ \varphi_{\widetilde Z}(-\tau_x,x) ; x \in \xi \}
\]
is an $(n-2)$-dimensional submanifold contained in the sliding region, with $n=2,3$. 
For $y =\varphi_{\widetilde Z}(-\tau_x,x) \in \alpha$ we define the curve $\beta:=\{\varphi_X(-\iota_y,y)\}$ contained in the crossing region, where $\iota_y>0$ is the smallest value such that the negative flow by the vector field $X$ touches $\Sigma$. By uniqueness of solutions of the vector field $X$, the shell type set $\mathcal{A}$ is foliated by orbits of $\varphi_X$, where $$\mathcal{A}:=\{\varphi_X(t,y); y \in \alpha \mbox{ and } -\iota_y\le t \le0\}.$$ This makes it easier to understand the behavior of the Filippov vector field. See Figures \ref{set_a} and \ref{Set_A} (A).

\begin{figure}[h]
    \centering
    \includegraphics[width=13cm]{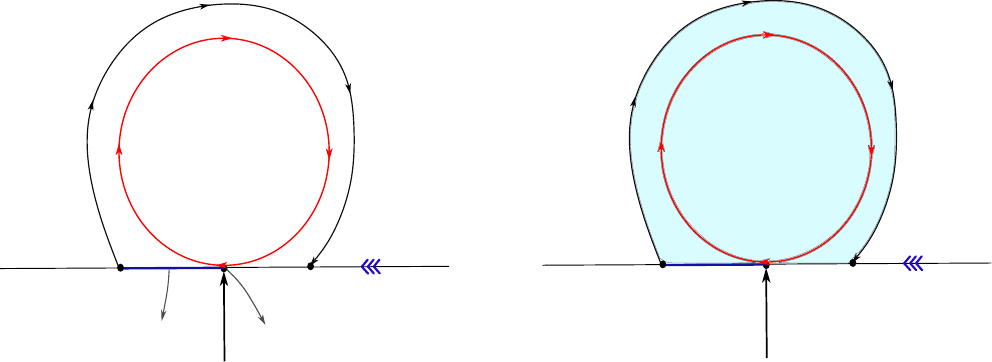}
    \put(-270,5){\small $\xi=\{p\}$}
    \put(-315,8){\small $\eta$}
    \put(-330,25){\small $\beta$}
    \put(-258,25){\small $\alpha$}
    \put(-89,75){\small $\overline{\mathcal{A}}$}
    \put(-200,33){\small $\Sigma$}
    \put(2,33){\small $\Sigma$}
    \caption{Construction of the set $\overline{\mathcal{A}}$ for $n=2$.}
    \label{set_a}
\end{figure}

\begin{figure}[h!]
    \centering
    \includegraphics[width=12cm]{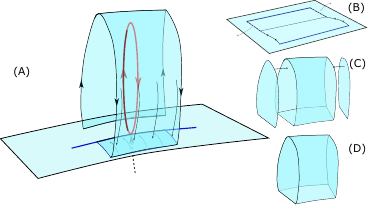}
    \put(-288,50){\small $\mathcal{S}_X^+$}
    \put(-220,56){\footnotesize $p$}
    \put(-258,60){\small $\xi$}
    \put(-248,42){\small $\alpha$}
    \put(-272,70){\small $\beta$}
    \put(-55,152){\scriptsize \textcolor{gray}{$\alpha$}}
    \put(-84,177){\scriptsize \textcolor{gray}{$\beta$}}
    \put(-72,162){\scriptsize \textcolor{gray}{$\xi$}}
     \put(-73,170){\small $D_1$}
     \put(-100,100){\small $D_2$}
     \put(-27,106){\small $D_3$}
    \put(-130,153){\small $\eta_0$}
    \put(-34,188){\small $\eta_1$}
    \put(-98,15){\footnotesize $\partial \mathcal{\overline{A}}$}
    \caption{Construction of the boundary of the set $\overline{\mathcal{A}}$ for $n=3$.}
    \label{Set_A}
\end{figure}

For $n=2$, consider the region between $\beta$ and $\xi$ in $\Sigma$. This region reduces to a curve $\eta$ joining the points $\beta$ and $\xi$. In this case, define $\mathcal{\overline{A}}$ as the region delimited by $\mathcal{A} \cup \eta$ which is topologically a 2-disk.

For $n=3$, let $\eta_0$ and $\eta_1$ be simple curves in $\Sigma$ connecting the endpoints of the curves $\beta$ and $\xi$, with $\eta_0 \cap \eta_1 = \emptyset$. The concatenation of these curves,  $\beta \cup \xi \cup \eta_0 \cup \eta_1,$ delimits a   
$2$-disk $D_1$ in $\Sigma$,  see \autoref{Set_A}(B).
 Now, we attach two $2$-disks, $D_2, D_3$, to the set $\mathcal{A}$ as illustrated in \autoref{Set_A}(C). Topologically, the resulting set is a 2-sphere \autoref{Set_A} (D).
Denote $\overline{\mathcal{A}}$ the $3$-disk delimited by the sphere like region constructed above. Finally, for both cases ($n=2,3$), define
$$\widetilde{\mathcal{A}} := \overline{\mathcal{A}} \cap N_X.$$

To incorporate the dynamics of $Y$ in $h<0$, we attach a flow box generated by the vector field $Y$ below the switching manifold.
Since we are assuming $Yh(p)>0$, define 
\[
N_Y :=
\{\varphi_Y(t,x) : x\in \widetilde{\mathcal{A}} \cap \Sigma,\ t\in[-\delta,0]\},
\]
for $\delta>0$ sufficiently small.

Topologically,
\[
N_Y \simeq D^1 \times D^k \times D^{\,n-k-1}.
\]

\begin{figure}[h!]
    \centering
    \includegraphics[width=12cm]{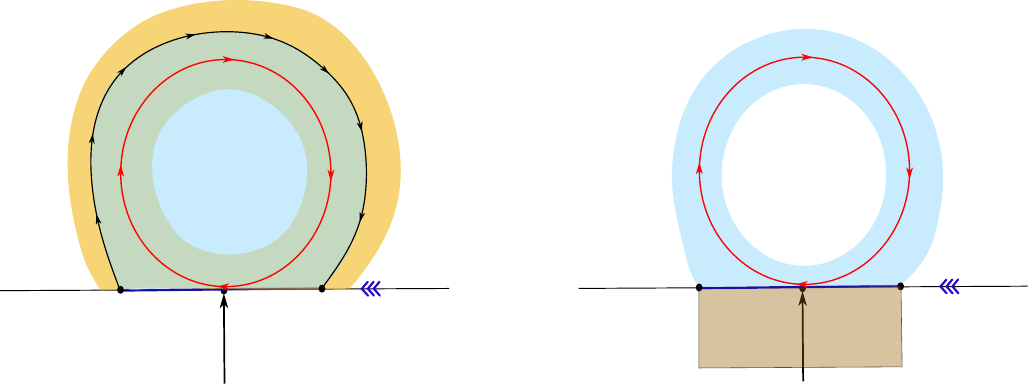}
    \put(-275,25){\small $p$}
    \put(-83,25){\small $p$}
    \put(-352,28){\small $\Sigma$}
    \put(-158,28){\small $\Sigma$}
    \put(-305,22){\small $\beta$}
    \put(-238,24){\small $\alpha$}
    \put(-228,100){\small $N_X$}
    \put(-270,70){\small $\overline{\mathcal{A}}$}
    \put(-130,70){\small $\widetilde{\mathcal{A}}$}
    \put(-125,10){\small $N_Y$}
    \caption{Isolating neighborhood $N$ for $n=2$.}
    \label{N_2}
\end{figure}

We then define an isolating neighborhood by taking the union
\[
N :=  \mathcal{\widetilde{A}} \cup N_Y,
\]
see Figures \ref{N_2} and \ref{N_3}.

\begin{figure}[h!]
    \centering
    \includegraphics[width=10cm]{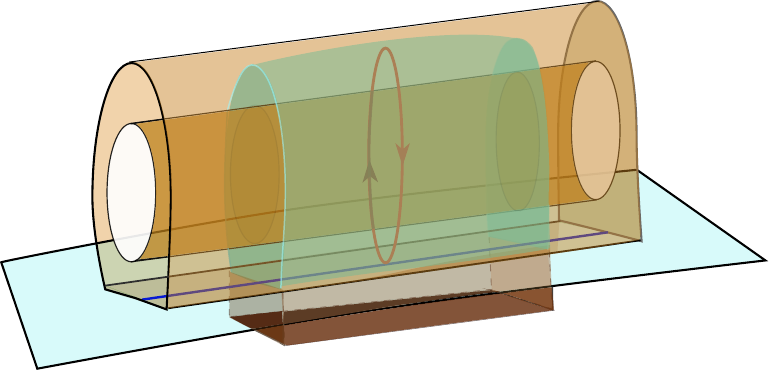}
    \put(-143,34){\scriptsize $p$}
    \put(-75,11){\small $N_Y$}
    \put(-207,100){\small $\mathcal{\overline{A}}$}
    \put(-45,97){\small $N_X$}
    \put(0,40){\small $\Sigma$}
    \put(-145,124){\small $\gamma$}
    \put(-248,20){\small $\mathcal{S}_X^+$}
    \caption{Isolating neighborhood $N$ for $n=3$.}
    \label{N_3}
\end{figure}

Observe that every Filippov trajectory starting in $N\setminus \{\gamma\}$ leaves $N$ in at least one time direction for $n \in \{2,3\}$ and for all $k \in \{0, \dots, n-1\}$. Consequently $\mathrm{Inv}(N)=\gamma,$ so $N$ is an isolating neighborhood for the Filippov flow.

\textbf{Construction of an index pair.}

Following the construction of index pairs in the sense of Mrozek we take a small compact neighborhood $P$ of the negative invariant set $\mathrm{Inv}^-(N)$
and $Q\simeq \partial P\cap \partial N.$
Then $(P,Q)$ is an index pair for $\gamma$, and the Conley index is given by
\[
\mathcal C(\gamma)\cong H_*(P,Q).
\]

The following properties hold:
\begin{enumerate}
    \item If $Yh(p)>0$ then $\Sigma^{s/e}$ is a sliding region and the additional boundary component $\partial N_Y$ contains at least one trajectory of the set $Inv^+(N)$;
    \item If $Yh(p)<0$ the region $\Sigma^{s/e}$ is an escaping region and $\partial N_Y$ contains at least one trajectory that belongs to $Inv^-(N)$. 
\end{enumerate}
\textbf{Conley index computation in dimension $n=2$:}

In this case, $\dim(\Sigma)=1$ and $\mathcal{S}_X^+=\{p\}$.

\begin{itemize}
\item For $Yh(p)>0$, we have that all trajectories starting at a point $x \in N_Y$ belongs to $Inv^+(N).$ Indeed, for all $x\in N_Y$ there exists a smaller value $t_x>0$ such that $\varphi_Y(t_x,x) \in \Sigma$ and there exist three possibilities:
    \begin{enumerate}
        \item $\varphi_Y(t_x,x) =p$, or
        \item $\varphi_Y(t_x,x) \in \Sigma^s$, or
        \item $\varphi_Y(t_x,x) \in \Sigma^c$.
    \end{enumerate}
    If $\varphi_Y(t_x,x) =p$, since $p \in \gamma$, the Filippov trajectory $\varphi_Z(t,x)$ goes into $\gamma$ for positive time $t$. Hence $x \in Inv^+(N)$.

    If $\varphi_Y(t_x,x)=y \in \Sigma^s$, then $\varphi_{\widetilde{Z}}(t,y) \rightarrow p$. This follows since the vector field $\widetilde{Z}$ is one dimensional and there are no pseudo-equilibrium points in this region (by assumption). Hence $x \in Inv^+(N)$.
    
    If $\varphi_Y(t_x,x) \in \Sigma^c$ then the Filippov trajectory goes through the crossing region of the switching manifold and the dynamics is now controlled by the vector field $X$. Since $\gamma$ is a limit cycle, there exists $\widetilde t_x>0$ such that $\varphi_X(\widetilde t_x,\varphi_Y(t_x,x)) \in \Sigma^s$. Using the argument above, once this orbit reaches the sliding region, its future is $p$. Hence, again the orbit goes into $\gamma$ and thus, $x \in Inv^+(N)$.

These arguments prove that every point in $N$ outside the orbit $\gamma$ belongs to $Inv^+(N)$.
Studying the different stability behaviors for $\gamma$ we have:

\begin{figure}[h!]
    \centering
    \includegraphics[width=12cm]{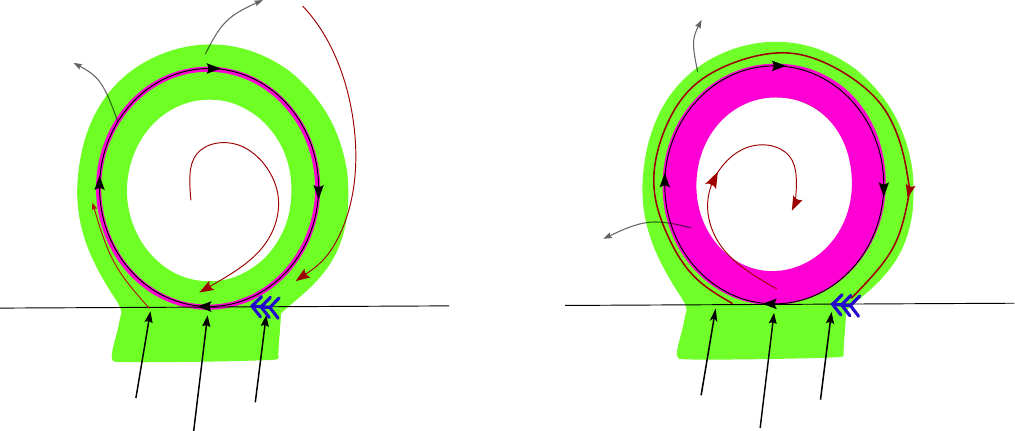}
    \put(-78,33){\small $p$}
    \put(-270,33){\small $p$}
    \put(-250,145){\footnotesize $Inv^+(N)$}
    \put(-108,145){\footnotesize $Inv^+(N)$}
    \put(-168,55){\footnotesize $Inv^-(N)$}
    \put(-240,35){\footnotesize $\Sigma^s$}
    \put(-48,35){\footnotesize $\Sigma^s$}
    \put(-315,35){\footnotesize $\Sigma^c$}
    \put(-123,35){\footnotesize $\Sigma^c$}
    \put(-393,128){\footnotesize $\gamma=Inv^+(N)\cap Inv^-(N)$}
    \caption{For $Yh(p)>0$, $|\lambda|<1$ and $|\lambda|>1$ respectively.}
    \label{fig:case1_att}
\end{figure}

    \begin{itemize}
    \item Case $|\lambda|<1$ ($k=0$, \autoref{fig:case1_att}): the limit cycle is attracting both inside of $\gamma$ under the flow of $X$ as well as for the flows of $Y$ and $\widetilde{Z}$ outside $\gamma$. Therefore, $Inv^-(N)=\gamma$ and $Inv^-(N)\cap \partial N=\emptyset$. Then 
    $$\mathcal{C}(\gamma)=H_*(S^1)\simeq \widetilde H_*(S^0 \vee S^1).$$

    \item Case $|\lambda|>1$ ($k=1$,\autoref{fig:case1_att} ): the limit cycle is repelling in the region in $N$ inside $\gamma$ and thus, this region belongs to $Inv^-(N)$. On the other hand, the region in $N$ outside $\gamma$ is attracting for both flows $\widetilde{Z}$ and $Y$, therefore belongs to $Inv^+(N)$. Hence $Inv^-(N)\simeq S^1 \times D^1$ and $Q\simeq S^1$. Thus 
    $$\mathcal{C}(\gamma)=H_*(S^1\times D^1,S^1)\cong \widetilde H_*(D^2).$$
\end{itemize}

\item For $Yh(p)<0$ all trajectories starting at a point $x \in N_Y$ belongs to $Inv^-(N).$ The argument is similar to the previous case by reversing time. 

\begin{figure}[h!]
    \centering
    \includegraphics[width=12cm]{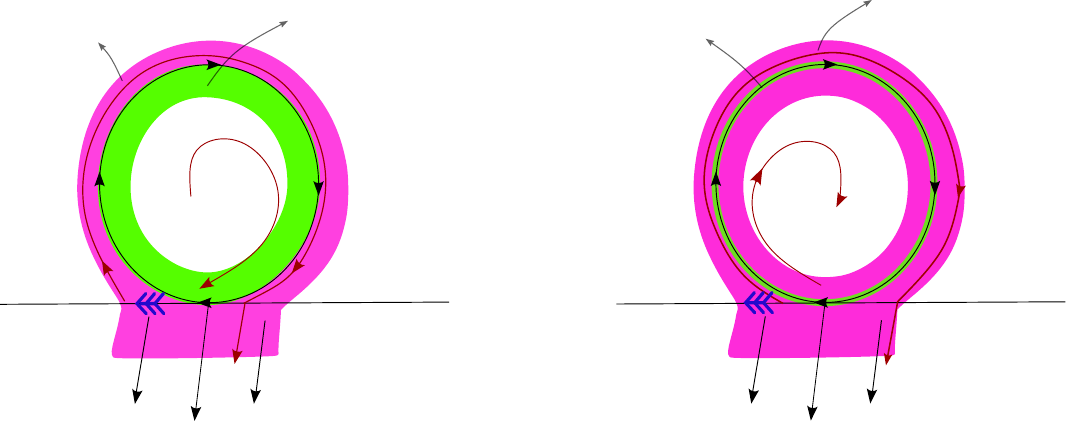}
    \put(-74,30){\small $p$}
    \put(-272,30){\small $p$}
    \put(-247,127){\footnotesize $Inv^+(N)$}
    \put(-330,125){\footnotesize $Inv^-(N)$}
    \put(-60,137){\footnotesize $Inv^-(N)$}
    \put(-244,30){\footnotesize $\Sigma^c$}
    \put(-48,30){\footnotesize $\Sigma^c$}
    \put(-315,30){\footnotesize $\Sigma^e$}
    \put(-123,30){\footnotesize $\Sigma^e$}
    \put(-178,128){\footnotesize $\gamma=Inv^+(N)\cap Inv^-(N)$}
    \caption{For $Yh(p)<0$, $|\lambda|<1$ and $|\lambda|>1$ respectively.}
    \label{fig:case2_rep}
\end{figure}
    \begin{itemize}
    \item Case $|\lambda|<1$ ($k=0$, \autoref{fig:case2_rep}): then the limit cycle is attracting in $N$ inside $\gamma$ and is repelling outside $\gamma$. In this case $Inv^-(N)\simeq S^1 \times D^1$ and $Q\simeq S^1$ which implies $$\mathcal{C}(\gamma)=H_*(S^1\times D^1,S^1)\cong \widetilde H_*(D^2)\cong 0.$$

    \item Case $|\lambda|>1$ ($k=1$, \autoref{fig:case2_rep}): then the limit cycle is repelling for both sides of $\gamma$, hence $N$ belongs to $Inv^-(N)$. Then $Inv^-(N)=S^1 \times D^1$, $Q\simeq S^1 \times \partial D^1$ and
    $$\mathcal{C}(\gamma)=H_*(S^1 \times D^1,S^1 \times \partial D^1)\cong \widetilde{H}_*(S^1 \vee S^2).$$
\end{itemize}
\end{itemize}

\textbf{Conley index in dimension $n=3$}

Now assume that $n=3$, then $\mathrm{dim} (\Sigma)=2$ and $\mathrm{dim}(\mathcal{S}_X^+)=1$. We begin with a study of the behavior of the Filippov multi-valued flow $\varphi_Z$ around $\gamma$. Consider $Yh(p)>0$, then in a neighborhood of $p$ one has $\Sigma=\Sigma^c \cup \Sigma^s \cup \mathcal{S}_X^+$. The flow on the sliding vector field can either be attracted or repelled to the point $p$ depending on the stability of $\gamma$. Observe the Figures \ref{dim3_atrat_p}, \ref{dim3_repel_p}:

\begin{figure}[H]
    \centering
    \includegraphics[width=14cm]{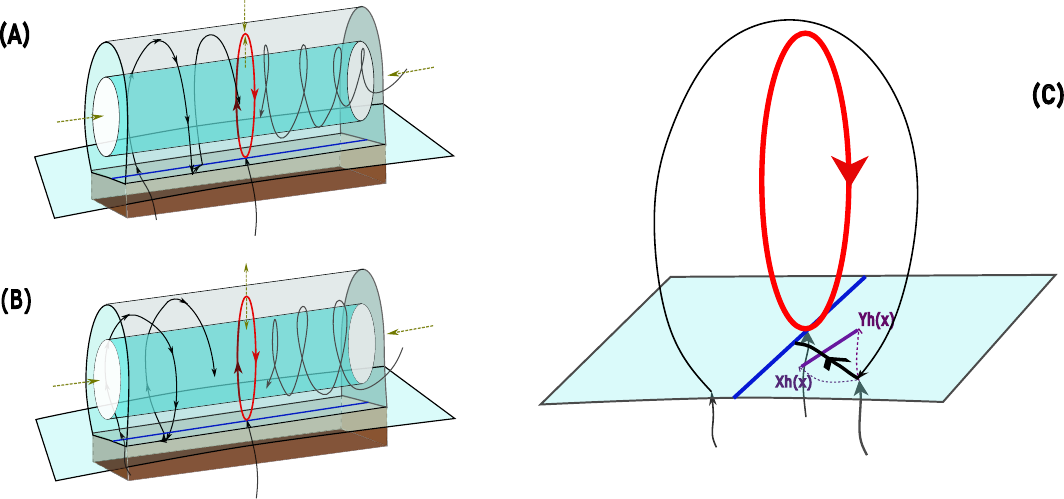}
    \put(-74,41){\footnotesize $x$}
    \put(-30,75){\footnotesize $\Sigma^s$}
    \put(-180,38){\footnotesize $\Sigma^c$}
    \put(-130,29){\footnotesize $\mathcal{S}_X^+$}
    \caption{ (A): $Yh(p)>0, |\lambda |<1, k=0$, (B): $Yh(p)>0, |\lambda|>1, k=1$, (C): behavior in $\widetilde{Z}$.}
    \label{dim3_atrat_p}
\end{figure}

\begin{figure}[H]
            \centering
            \includegraphics[width=14cm]{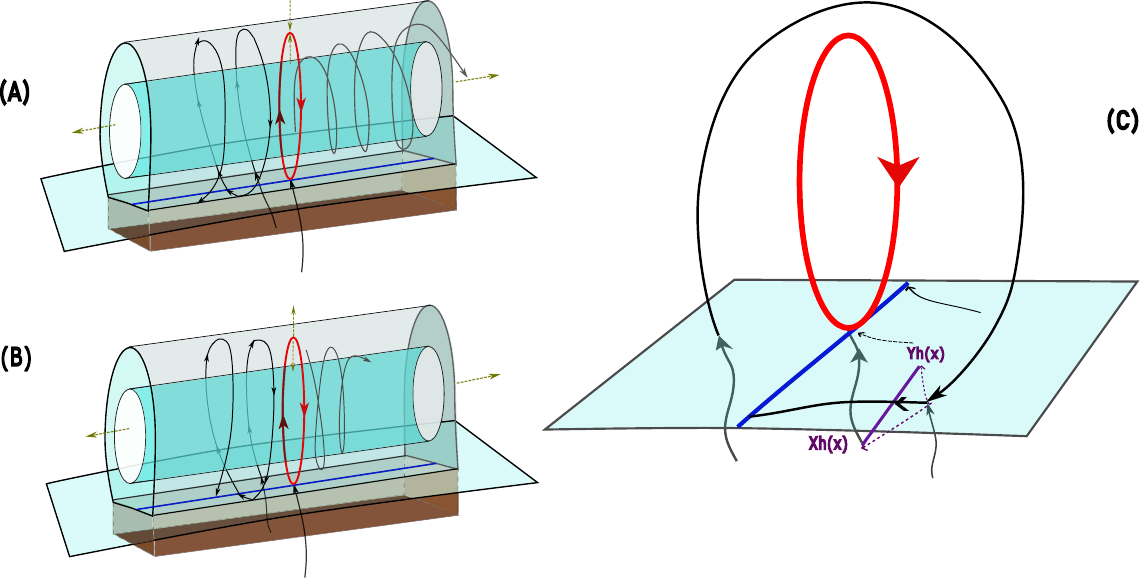}
            \put(-70,58){\footnotesize $x$}
            \put(-30,90){\footnotesize $\Sigma^s$}
            \put(-183,58){\footnotesize $\Sigma^c$}
            \put(-146,44){\footnotesize $\mathcal{S}_X^+$}
            \caption{(A): $Yh(p)>0, |\lambda |<1, k=1$, (B): $Yh(p)>0, |\lambda|>1, k=2$, (C): behavior in $\widetilde{Z}$.}
            \label{dim3_repel_p}
 \end{figure}

The (olive green) dashed arrows are not part of the flow, they merely represent the attracting or repelling nature of the flow.

 In \autoref{dim3_atrat_p} it can be observed that the flow for the sliding vector field exhibits an attracting behavior towards the point $p$ since the trajectories of the vector field $X$ tends to the periodic orbit in the same direction of $\mathcal{S}_X$. That is, it doesn't matter if $|\lambda|\lessgtr 1$ as the behavior on the sliding vector field is almost the same, represented in \textbf{(C)}. 
 
 The discussion is analogous in \autoref{dim3_repel_p}, but now we may observe that there exists at least one trajectory of the sliding vector field that converges to the point $p$ while the others have a repelling behavior for each side of $p$. This implies that, in both cases, the set $Inv^+(N)$ intersects the sliding vector field near $p$. 
 
For $Yh(p)<0$ we have an escaping region instead of a sliding region. The sliding flow behaves similar but reversing time and the set $Inv^-(N)$ intersects the sliding vector field near $p$. See Figures \ref{dim3_atrat_n} and \ref{dim3_repel_n}.

    \begin{figure}[h!]
     \centering
     \includegraphics[width=14cm]{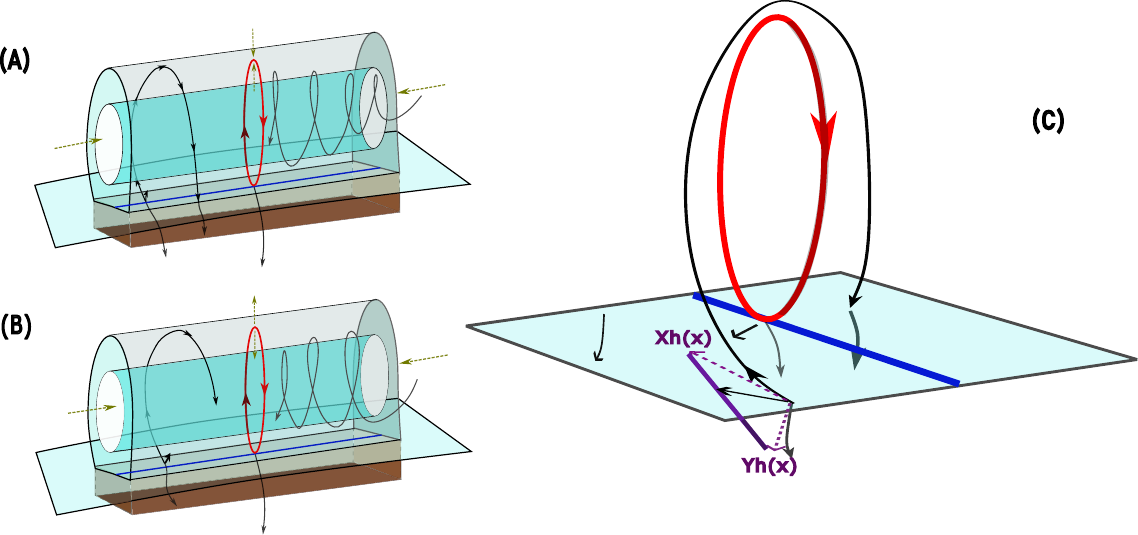}
     \put(-120,46){\footnotesize $x$}
    \put(-40,60){\footnotesize $\Sigma^c$}
    \put(-210,68){\footnotesize $\Sigma^e$}
    \put(-64,44){\footnotesize $\mathcal{S}_X^+$}
     \caption{(A): $Yh(p)<0, |\lambda |<1, k=0$, (B): $Yh(p)<0, |\lambda|>1, k=1$, (C): behavior in $\widetilde{Z}$.}
     \label{dim3_atrat_n}
 \end{figure}

 \begin{figure}[h!]
     \centering
     \includegraphics[width=14cm]{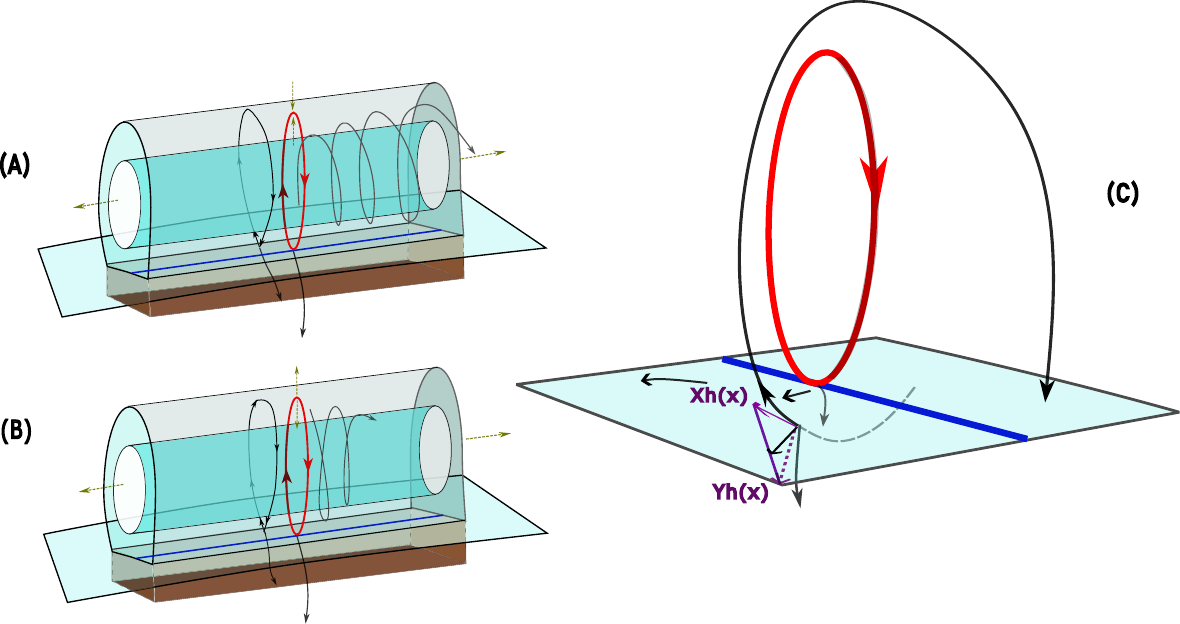}
     \put(-127,65){\footnotesize $x$}
    \put(-37,68){\footnotesize $\Sigma^c$}
    \put(-200,75){\footnotesize $\Sigma^e$}
    \put(-58,49){\footnotesize $\mathcal{S}_X^+$}
     \caption{(A): $Yh(p)<0, |\lambda |<1, k=1$, (B): $Yh(p)<0, |\lambda|>1, k=2$, (C): behavior in $\widetilde{Z}$.}
     \label{dim3_repel_n}
 \end{figure}

We can now describe the sets $Inv^\pm (N)$ and calculate the Conley index for each case, see Figures \ref{dim3_index_p}, \ref{dim3_index_n}.
\begin{itemize}
    \item $Yh(p)>0$:
       \begin{figure}[h!]
       \centering
       \includegraphics[width=13cm]{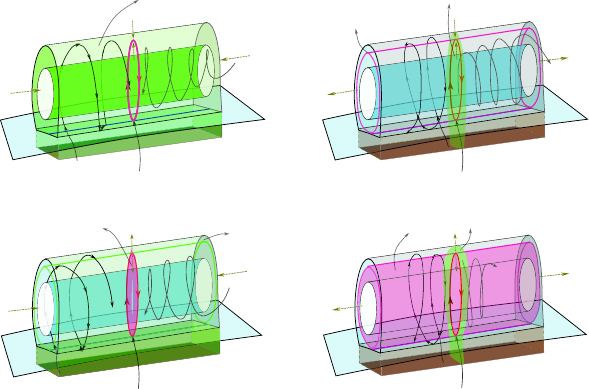}
       \put(-280,245){\footnotesize $Inv^+(N)$}
       \put(-100,245){\footnotesize $Inv^+(N)$}
       \put(-222,95){\footnotesize $Inv^+(N)$}
       \put(-85,105){\footnotesize $Inv^+(N)$}
       \put(-160,230){\footnotesize $Inv^-(N)$}
       \put(-341,100){\footnotesize $Inv^-(N)$}
       \put(-128,100){\footnotesize $Inv^-(N)$}
       \put(-320,130){\scriptsize $|\lambda|<1 \mbox{ and }k=0$}
       \put(-120,130){\scriptsize $|\lambda|<1 \mbox{ and }k=1$}
       \put(-320,-8){\scriptsize $|\lambda|>1 \mbox{ and }k=1$}
       \put(-120,-8){\scriptsize $|\lambda|>1 \mbox{ and }k=2$}
       \caption{Description of $Inv^\pm (N)$ for $Yh(p)>0$ and $k=0,1,2.$}
       \label{dim3_index_p}
   \end{figure}
    \begin{itemize}
        \item \textbf{ Case $|\lambda|<1$:} In this case $\gamma$ is attracting in the transverse direction associated to the Poincaré map.
        \begin{itemize}
            \item If $k=0$ (\autoref{dim3_atrat_p}, (A)), then $\gamma$ is an attracting periodic orbit and locally the sliding vector field $\widetilde{Z}$ converges to $p$. Hence $Inv^-(N)=\gamma$ and $Inv^+(N)=N$. 
                The index pair satisfies $P\simeq S^1$, $Q=\emptyset$, and therefore
                \[
                    \mathcal{C}(\gamma)\cong H_*( S^1)\cong \widetilde{H}_*(S^1\vee S^0).
                \] 
            \item If $k=1$ (\autoref{dim3_repel_p}, (A)), then $\gamma$ has one unstable direction in relation to the vector field $X$ and the sliding vector field has at least one trajectory that converges to $p$. In this case from the classical theory, there exists a cylinder of type $\gamma \times I$ such that the trajectory in negative time for all points in the cylinder tends to $\gamma$. Since $\gamma$ is tangent to $\Sigma$ and the analyses is local, sufficiently close to $\gamma$ the set $Inv^-(N)$ is topologically equivalent to $S^1 \times I$. Again from the classical theory, since $\gamma$ has a saddle type behavior, $Inv^+(N)\simeq D^2 \cap N$ since we have proved that the set $Inv^+(N)$ has at least one trajectory in $\widetilde{Z}$. Then  
                \[
                \mathcal{C}(\gamma)\cong H_*(P,Q)\cong \widetilde{H}_*(P/Q)\cong  \widetilde H_*(S^1\vee S^2).
                \]
                where $Q=S^1 \sqcup S^1$.
        \end{itemize}
        \item \textbf{Case $|\lambda|>1$:}
        Now $\gamma$ is repelling in the transverse direction.

        \begin{itemize}
            \item If $k=1$ (\autoref{dim3_atrat_p}, (B)), then $Inv^+(N)\cong S^1 \times D^1$ a solid cylinder in $N$ which outer boundary coincides with the outer boundary of $N$ and the inner boundary is given by $\simeq \gamma \times D^1$, $Inv^-(N)\cong D^2 \times N$ the region inside $\gamma$, $Q\cong S^1$. See \autoref{dim3_index_p}.
            \[
               \mathcal{C}(\gamma)\cong H_*(S^1\times D^1,S^1)\cong \widetilde{H}_*(D^2).
            \]

            \item If $k=2$ (\autoref{dim3_repel_p}, (B)), then the sliding vector field has at least one trajectory that converges to $p$ in positive time. $Inv^+(N)\simeq D^2 \cap N$ the outer ring of $\gamma$, $Inv^-(N)\simeq S^1 \times D^1 \times I$ the solid cylinder, with outer cylinder boundary as $\gamma \times D^1$. In this case, $Inv^-(N) \cap \partial N$ is topologically equivalent to the boundary of $N$ minus a circle, then $Q\simeq S^1 \times D^1$ and
            \[
            \mathcal{C}(\gamma)\cong H_*(S^1\times D^1 \times I,S^1\times D^1)\cong \widetilde{H}_*(D^2).
            \]
        \end{itemize}

    \end{itemize}

    \item \textbf{Case $Yh(p)<0$.} In this case $\Sigma^{s/e}$ is an escaping region. The argument is analogous, reversing time. Now every point in the outside region does not belong to $Inv^+(N)$. Indeed, by the discussion above, at least one trajectory of the sliding vector field belongs to $Inv^-(N)$ near $p$ (see Figures \ref{dim3_atrat_n}, \ref{dim3_repel_n}). Repeating the same reasoning with $Inv^+$ and $Inv^-$ interchanged, we obtain:

\begin{figure}[H]
        \centering
        \includegraphics[width=13cm]{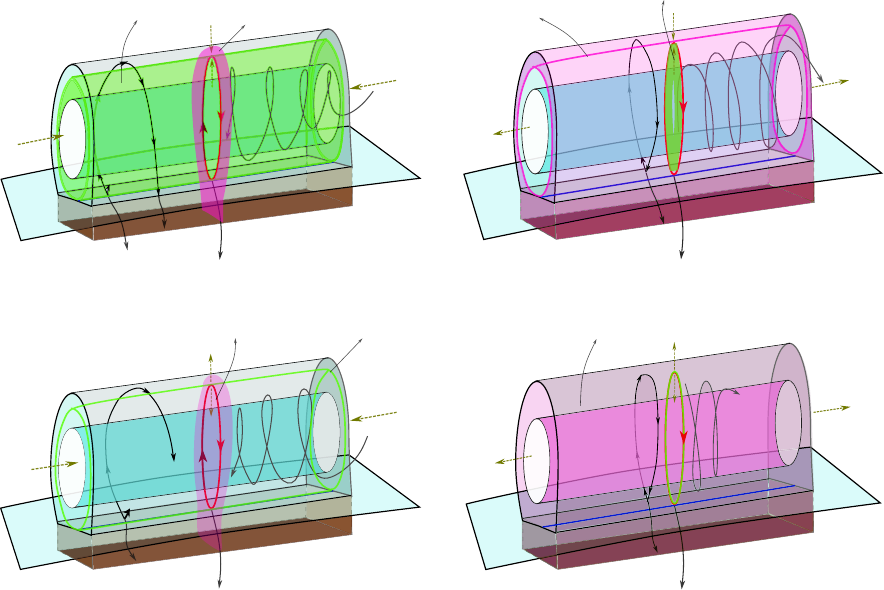}
        \put(-320,240){\footnotesize $Inv^+(N)$}
        \put(-270,240){\footnotesize $Inv^-(N)$}
       \put(-100,250){\footnotesize $Inv^+(N)$}
       \put(-217,107){\footnotesize $Inv^+(N)$}
       \put(-180,240){\footnotesize $Inv^-(N)$}
       \put(-278,107){\footnotesize $Inv^-(N)$}
       \put(-128,108){\footnotesize $Inv^-(N)$}
        \put(-320,130){\scriptsize $|\lambda|<1 \mbox{ and }k=0$}
       \put(-120,130){\scriptsize $|\lambda|<1 \mbox{ and }k=1$}
       \put(-320,-8){\scriptsize $|\lambda|>1 \mbox{ and }k=1$}
       \put(-120,-8){\scriptsize $|\lambda|>1 \mbox{ and }k=2$}
        \caption{Description of $Inv^\pm (N)$ for $Yh(p)<0$ and $k=0,1,2.$}
        \label{dim3_index_n}
    \end{figure}

    \begin{itemize}
        \item \textbf{Case $|\lambda|<1$:} 
        \begin{itemize}
            \item If $k=0$, $Inv^+(N)\simeq \gamma \times D^1 \times I$ the solid cylinder ``inside'' $\gamma$, $Inv^-(N)\simeq D^2 \cap N$ outer $\gamma$ and $Q\simeq S^1$
                \[
                    \mathcal{C}(\gamma)\cong H_*(S^1\times D^1,S^1)\cong \widetilde{H}_*(D^2) .
                \] 
            \item If $k=1$, $Inv^+(N)\simeq D^2 \cap N$ inside $\gamma$, $Inv^-(N)\simeq \gamma \times D^1 \times I$ the solid cylinder outside $\gamma$, $Q\simeq \gamma\times D^1$
                \[
                \mathcal{C}(\gamma)\cong H_*(S^1 \times D^1 \times I, S^1 \times D^1)\cong \widetilde{H}_*(D^2).
                \]
        \end{itemize}
            
        \item \textbf{Case $|\lambda|>1$:}
        \begin{itemize}
            \item If $k=1$, $Inv^+(N)\simeq \gamma \times D^1$, $Inv^-(N)\simeq D^2 \cap N$, $Q\simeq S^1 \sqcup S^1$
            \[
               \mathcal{C}(\gamma)\cong H_*(S^1\times D_1, S^1 \sqcup S^1) \cong \widetilde{H}_*(S^2 \vee S^1).
            \]

            \item If $k=2$, $Inv^+(N)=\gamma$, $Inv^-(N)=N\simeq $, $Q\simeq \partial N$.
            \[
            \mathcal{C}(\gamma)\cong H_*(S^1 \times D^2,S^1\times S^1)\cong \widetilde{H}_*(S^3 \vee S^2).
            \]
        \end{itemize}
    \end{itemize}

Hence, we have proved that
    \[
      \mathcal{C}(\gamma)=
        \begin{cases}
        \widetilde H_*(S^k\vee S^{k+1}), 
        & (|\lambda|<1,\,Yh(p)>0)\ \text{or}\ (|\lambda|>1,\,Yh(p)<0),\\[1ex]
        \widetilde{H}_*(*), 
        & (|\lambda|<1,\,Yh(p)<0)\ \text{or}\ (|\lambda|>1,\,Yh(p)>0).
        \end{cases}
    \]
\end{itemize}


\end{proof}

\subsection{Special Cases in Planar Filippov Vector Fields}\label{subsec:special}
In this section, we analyze particular configurations of limit cycles in planar Filippov vector fields.
\subsubsection{Figure-Eight-Shaped Limit Cycle}

    \begin{figure}[h!]
    \centering
    \includegraphics[width=15cm]{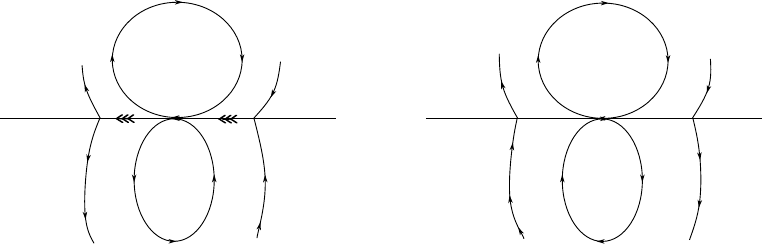}
    \put(-330,62){\footnotesize $p$}
    \put(-400,58){\footnotesize $\Sigma^e$}
    \put(-270,58){\footnotesize $\Sigma ^s$}
    \put(-290,120){\footnotesize $\gamma_X$}
    \put(-310,10){\footnotesize $\gamma_Y$}
    \put(-92,62){\footnotesize $p$}
    \put(-160,58){\footnotesize $\Sigma^c$}
    \put(-30,58){\footnotesize $\Sigma ^c$}
    \put(-50,120){\footnotesize $\gamma_X$}
    \put(-70,10){\footnotesize $\gamma_Y$}
    \caption{Figure-eight-shaped periodic orbit.}
    \label{foldfold}
\end{figure}

Let $Z=(X,Y)$ be a planar Filippov vector field. Assume there exist two hyperbolic limit cycles $\gamma_X$ and $\gamma_Y$, associated with the smooth vector fields $X$ and $Y$, respectively, both tangent to the switching manifold $\Sigma$ at the same point, see \autoref{foldfold}. More precisely,
\[
\gamma_X \cap \Sigma = \gamma_Y \cap \Sigma = \{p\},
\]
where $p$ is a visible fold--fold point. Define
\[
\gamma := \gamma_X \cup \gamma_Y.
\]

Since $p$ is a visible fold--fold point, we have
\[
Xh(p)=Yh(p)=0, \qquad X^2h(p)>0, \qquad Y^2h(p)<0.
\]
Consequently, for $x \in \Sigma$ sufficiently close to $p$, two distinct configurations may occur:
\begin{itemize}
    \item $Xh(x)Yh(x)<0$;
    \item $Xh(x)Yh(x)>0$.
\end{itemize}

Throughout this discussion, we consider only isolated orbits in the sense of Conley; in particular, no singularities of the Filippov vector field are present in a neighborhood of $p$.

In the first situation, $\gamma$ is not isolated in the set of closed orbits. Indeed, since $Xh(x)Yh(x)<0$ for all $x \in \Sigma \cap B_\varepsilon(p)$ and $Xh(p)=Yh(p)=0$, the switching manifold locally splits into a sliding region and an escaping region. In these regions, the Filippov vector field is defined through the closed convex combination of $X$ and $Y$. The associated sliding vector field provides a connection between the sliding and escaping sectors near $p$, see \autoref{foldfold_1}.

    \begin{figure}[h!]
    \centering
    \includegraphics[width=15cm]{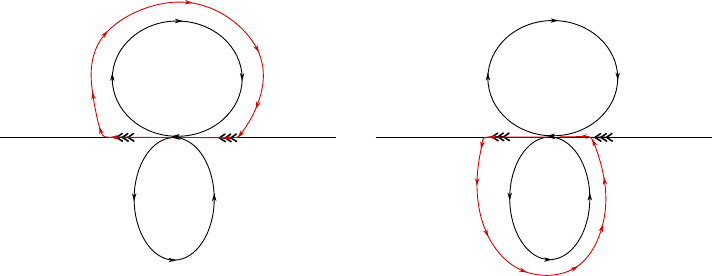}
    \put(-325,75){\footnotesize $p$}
    \put(-380,70){\footnotesize $\Sigma^e$}
    \put(-280,70){\footnotesize $\Sigma ^s$}
    \put(-98,75){\footnotesize $p$}
    \put(-160,70){\footnotesize $\Sigma^e$}
    \put(-50,70){\footnotesize $\Sigma ^s$}
    \caption{Non-isolated figure-eight-shaped periodic orbit.}
    \label{foldfold_1}
    \end{figure}
    
Therefore, $\gamma$ is not isolated among closed orbits and does not constitute an isolated invariant set in the sense of Conley.

If $Xh(x)Yh(x)>0$ in a neighborhood of $p$, then $\Sigma$ consists locally only of crossing regions. In this case, the Filippov vector field coincides with the classical concatenation of the smooth flows of $X$ and $Y$, except at the point $p$.

Since $p$ is a visible fold--fold point, both vector fields are tangent to $\Sigma$ at $p$ and cross the switching manifold in opposite directions on each side of $p$. Under this condition, a figure-eight-shaped isolated limit cycle \textit{may} occur.

Let $\varepsilon>0$ be sufficiently small and define
\[
\alpha_\varepsilon := (\Sigma \cap B_\varepsilon(p)) \setminus \{p\}.
\]
Take $x_0 \in \alpha_\varepsilon$ and assume, without loss of generality, that $Xh(x_0)>0$. Since $\gamma_X$ is a limit cycle of $X$, there exists a smallest $t_{x_0}>0$ such that
\[
\varphi_X(t_{x_0},x_0)\in\Sigma
\quad \text{and} \quad
Xh(\varphi_X(t_{x_0},x_0))<0.
\]
Set $x_1:=\varphi_X(t_{x_0},x_0)$. Then $Yh(x_1)<0$. Because $\gamma_Y$ is a limit cycle of $Y$, there exists a smallest $t_{x_1}>0$ such that
\[
\varphi_Y(t_{x_1},x_1)\in\Sigma
\quad \text{and} \quad
Yh(\varphi_Y(t_{x_1},x_1))>0.
\]
\begin{figure}
    \centering
    \includegraphics[width=7cm]{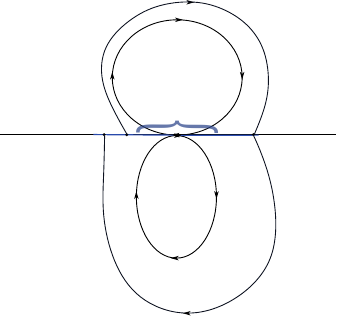}
    \put(-98,98){\footnotesize $p$}
    \put(-98,118){\footnotesize $\alpha_\epsilon$}
    \put(-127,100){\footnotesize $x_0$}
    \put(-45,100){\footnotesize $x_1$}
    \put(-175,110){\footnotesize $\varphi_Y(t_{x_1},x_1)$}
    \put(2,105){\footnotesize $\Sigma$}
    \caption{First return map near $\gamma$.}
    \label{folfold_3}
\end{figure}

In the crossing region, the Filippov flow is defined as the concatenation of the flows of $X$ and $Y$. Thus, for each $x\in\alpha_\varepsilon$, there exists a smallest $t_x>0$ such that
\[
\varphi_Z(t_x,x)\in\Sigma
\quad \text{and} \quad
\operatorname{sgn}(Xh(\varphi_Z(t_x,x)))=\operatorname{sgn}(Xh(x)).
\]

Since $\Sigma$ is one-dimensional, define
\[
\tau:\alpha_\varepsilon^+ \to \mathbb{R},
\qquad
\tau(x)=|x-\varphi_Z(t_x,x)|,
\]
where
\[
\alpha_\varepsilon^+:=\{x\in\alpha_\varepsilon: Xh(x)>0\},
\]
see \autoref{folfold_3}.

If $\tau(x)\neq 0$ for all $x\in\alpha_\varepsilon^+$ and $\epsilon>0$, then the figure-eight orbit is isolated, see \autoref{foldfold_2}.

\begin{figure}[h!]
    \centering
    \includegraphics[width=13cm]{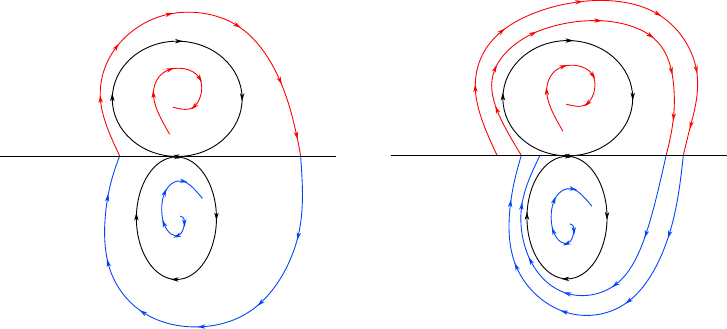}
    \put(-283,82){\footnotesize $p$}
    \put(-83,82){\footnotesize $p$}
    \put(-197,85){\footnotesize $\Sigma$}
    \put(0,85){\footnotesize $\Sigma$}
    \caption{Examples where $\tau(x)= 0$ and $\tau(x)\neq 0$, respectively.}
    \label{foldfold_2}
    \end{figure}

\begin{proposition}
    Let $\gamma=\gamma_X\cup \gamma_Y$ be a figure-eight-shaped \textbf{isolated} limit cycle of the planar Filippov vector field $Z$. Let $k_X$ and $k_Y$ be the morse index of $\gamma_X$ and $\gamma_Y$, resp.

Then, the Conley index of $\gamma$ is:
\begin{itemize}
    \item  $\mathcal{C}(\gamma)=H_*(S^1\vee S^1)$, if $k_X=k_Y=0$;
    \item $\mathcal{C}(\gamma)\cong \widetilde{H}_*(S^2 \vee S^1 \vee S^1)$, if $k_X=k_Y=1$;
    \item $\mathcal{C}(\gamma)\cong \widetilde{H}_*(S^1)$, if $k_X \ne k_Y$.
\end{itemize}

\end{proposition}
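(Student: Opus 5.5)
We follow the same strategy as in the tangential-cycle proposition: build an isolating neighborhood $N$ of $\gamma=\gamma_X\cup\gamma_Y$ adapted to the crossing structure near $p$, identify $\mathrm{Inv}^-(N)$, take $P$ a small compact neighborhood of $\mathrm{Inv}^-(N)$ in $N$ with $Q\simeq P\cap\partial N$, and compute $H_*(P,Q)\cong\widetilde H_*(P/Q)$.

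\textbf{Construction of $N$.} Since $\gamma_X$ and $\gamma_Y$ are hyperbolic cycles of $X$ and $Y$, each has an annular tubular neighborhood $N_X\simeq S^1\times D^1$ and $N_Y\simeq S^1\times D^1$. We take $N$ to be $(N_X\cap\{h\ge0\})\cup(N_Y\cap\{h\le0\})$, suitably thickened near $p$. Because $Xh\,Yh>0$ on $\alpha_\varepsilon$, the set $\Sigma\cap N$ consists of crossing points and $p$. Topologically, $N$ is a regular neighborhood of a wedge of two circles, so it is a pair of pants (a disk with two holes). Its boundary has three components: the inner circle $c_X$ of the $X$-annulus, the inner circle $c_Y$ of the $Y$-annulus, and the outer boundary $c_0$, which runs along the outside of both lobes.

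The isolation hypothesis is exactly $\tau(x)\neq0$ on $\alpha_\varepsilon^+$. Combined with the one-dimensionality of $\Sigma$, it shows that the composed Filippov return map $\alpha_\varepsilon^+\to\alpha_\varepsilon^+$ has no fixed point, so every orbit in $N\setminus\gamma$ leaves $N$ in some time direction and $\mathrm{Inv}(N)=\gamma$.

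\textbf{Case analysis.} The return map near $p$ is the composition of the $X$- and $Y$-half-returns. Away from $p$, on each lobe, the side of $\gamma_X$ (resp.\ $\gamma_Y$) inside the lobe keeps the transverse behaviour of the smooth cycle. The behaviour along the outer boundary component is governed by the composed return map on the outer side.
\begin{itemize}
\item If $k_X=k_Y=0$, both cycles attract, so $\mathrm{Inv}^-(N)=\gamma$ and $Q=\emptyset$. Hence $\mathcal C(\gamma)=H_*(\gamma)=H_*(S^1\vee S^1)$.
\item If $k_X=k_Y=1$, everything repels, so $\mathrm{Inv}^-(N)=N$ and $Q=\partial N$ is three circles. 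Then $P/Q$ is a pair of pants with its three boundary circles collapsed, which is homotopy equivalent to $S^2\vee S^1\vee S^1$ (its Euler characteristic is $-1+2=1$, consistent with $1-2+1$).
\item If $k_X\neq k_Y$, say $\gamma_X$ repels and $\gamma_Y$ attracts, the inner side of the $X$-lobe lies in $\mathrm{Inv}^-(N)$, while the inner side of the $Y$-lobe does not. On the outer boundary, orbits are carried from the $X$-lobe to the $Y$-lobe across $\Sigma^c$, so the net effect is a one-sided exit. Thus $\mathrm{Inv}^-(N)$ deformation retracts onto $\gamma$ together with a collar running to $c_X$ (and to part of $c_0$). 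Collapsing the exit set, we expect $P/Q\simeq S^1$ with a disk attached to one loop, which is homotopy equivalent to $S^1$.
\end{itemize}

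\textbf{Main obstacle.} The hardest step is the mixed case, and in particular determining the behaviour on the outer side, where the orbit passes alternately near $\gamma_X$ and near $\gamma_Y$. We would first show that the outer one-sided return map is the composition of an expanding and a contracting half-map. The isolation condition $\tau\neq0$ then fixes the sign of $x-\varphi_Z(t_x,x)$, and hence a definite direction of drift. We expect this drift direction not to affect the homotopy type of $P/Q$: in both alternatives, collapsing the exit collar kills exactly one of the two loops of $\gamma$, giving $\widetilde H_*(S^1)$. Finally, we would verify the index-pair axioms (strong positive invariance of $P$ and $Q$ for $\pi|_N$) exactly as in the crossing-cycle proposition. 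This works because only crossing points and the single point $p$ occur in $N\cap\Sigma$, so the multivalued flow is single-valued except at $p$.
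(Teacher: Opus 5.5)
Your overall route is the paper's. You build a pair-of-pants isolating neighborhood adapted to the crossing structure (the paper writes $N\cong(S^1\vee S^1)\times D^1$), identify $\mathrm{Inv}^-(N)$ case by case, and compute $H_*(P,Q)\cong\widetilde H_*(P/Q)$. Your two equal-index cases match the paper. Your Euler-characteristic check is garbled, though the conclusion stands: in fact $\chi(N/\partial N)=\chi(N)-\chi(\partial N)+1=-1-0+1=0=\chi(S^2\vee S^1\vee S^1)$.

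The gap is in the case $k_X\neq k_Y$. The outer composed return map has no fixed point, so it is either contracting or expanding on $\alpha_\varepsilon^+$. For an $N$ built from orbit arcs as in the paper, the outer collar therefore either meets $\mathrm{Inv}^-(N)$ only along $\gamma$ or lies entirely in it. With your convention ($\gamma_X$ repelling, $\gamma_Y$ attracting), $Q$ is then $c_X$ or $c_X\sqcup c_0$; a ``part of $c_0$'' alternative does not arise. The paper treats these two scenarios separately (``concatenated attracting'' and ``concatenated repelling''), and you need to as well.

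The reason is that your unifying claim, that collapsing the exit set ``kills exactly one of the two loops of $\gamma$'', is false in the second scenario. If the outer map contracts, $\mathrm{Inv}^-(N)=\gamma\cup(\text{inner collar of }\gamma_X)$ and $Q=c_X$. Then $P/Q\simeq S^1\vee D^2\simeq S^1$ with $\gamma_Y$ surviving, as you say. If the outer map expands, $\mathrm{Inv}^-(N)$ is, up to homotopy $P$, the region bounded by $c_X$, $c_0$ and $\gamma_Y$, and $Q=c_X\sqcup c_0$. With suitable orientations, $[c_X]=[\gamma_X]$ and $[c_0]=[\gamma_X]+[\gamma_Y]$ in $H_1(P)$, so $H_1(Q)\to H_1(P)$ is an isomorphism. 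Consequently both loops of $\gamma$ die in $H_1(P,Q)$, and $H_2(P,Q)=0$. The surviving class $H_1(P,Q)\cong\mathbb{Z}$ comes from $\ker\bigl(H_0(Q)\to H_0(P)\bigr)$, i.e.\ from an arc joining the two components of $Q$. Geometrically, $P/Q$ is a disk with two interior points identified, which is what the paper writes as $[(S^2\setminus D^1),\{x,y\}]\simeq S^1$.

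So your final answer $\widetilde H_*(S^1)$ is correct. However, it has to be obtained by this separate computation, not by the heuristic that the drift direction does not matter.
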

\begin{proof}  

Since $\gamma$ is isolated, there exists $\epsilon>0$ such that $\tau(x)\ne 0 \forall x \in \alpha_\epsilon^+$.
We choose a small tubular neighborhood $N$ of $\gamma$ such that:
\begin{enumerate}
    \item $N$ contains no other invariant sets except $\gamma$;
    \item the intersection $N \cap \Sigma$ is a neighborhood of $p$ in $\Sigma$.
\end{enumerate}
\paragraph{Construction of the isolating neighborhood:}
With the same ideas as in the previous section, we want to construct an isolating neighborhood that ``respect'' the flow.

Let $q \in \alpha_\epsilon$ and, without loss of generality, assume $Xh(q)<0$. There exists $t_0,t_1>0$ such that $q_0:=\varphi_X(-t_0,q)\in \Sigma$ and $q_1:=\varphi_Y(t_1,q) \in \Sigma$ since $q$ is sufficiently close to $p$, see \autoref{N_fig_oito}.

\begin{figure}[h!]
    \centering
    \includegraphics[width=11cm]{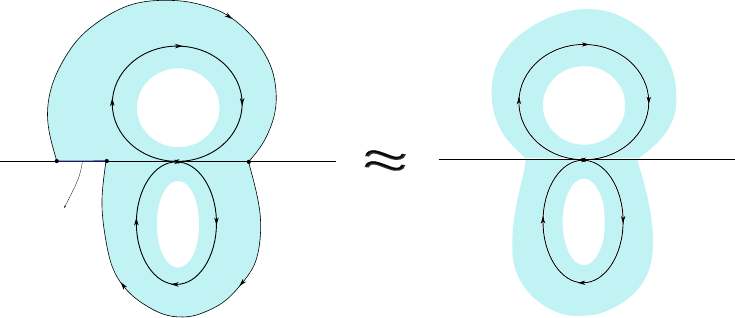}
    \put(-205,61){\small $q$}
    \put(-292,61){\small $q_0$}
    \put(-272,70){\small $q_1$}
    \put(-240,61){\footnotesize $p$}
    \put(-210,127){\small $\alpha_0$}
    \put(-215,7){\small $\alpha_1$}
    \put(-290,40){\small $I$}
    \put(-170,65){\small $\Sigma$}
    \put(-67,61){\footnotesize $p$}
    \put(0,65){\small $\Sigma$}
    \caption{Isolating neighborhood $N$ for an figure-eight-shaped limit cycle.}
    \label{N_fig_oito}
\end{figure}

Let $$\alpha_0:=\{x \in \mathbb{R}^2 s.t. \mbox{ } \exists \mbox{ }t \in [0,t_0] ; x=\varphi_X(-t,q)\}$$ and $$\alpha_1:=\{x \in \mathbb{R}^2 s.t. \mbox{ }\exists \mbox{ } t \in [0,t_1]; x=\varphi_Y(t,q)\}.$$
Denote by $\mathcal{\overline{A}}$ the region delimited by $\alpha_0 \cup \alpha_1\cup I$ where $I\subset \Sigma$ is the segment between $\varphi_X(-t_0,q)$ and $\varphi_Y(t_1,q)$. As the previous section, we can consider the cut round handle for $\gamma_X$ and $\gamma_Y$ as $N_X$ and $N_Y$ respectively. Then we define $N$ as the intersection of $\mathcal{\overline{A}}$ with $N_X$ and $N_Y$. Hence $N$ is well defined by construction, compact and since $\tau(x)\ne 0$, $\gamma$ is the maximal invariant set in $N$.

Observe that $N$ can be simply expressed as $$N \cong (S^1 \vee S^1)\times D^1.$$

\paragraph{Behavior outside $\gamma$ and the Conley index:}
\begin{itemize}
    \item Case $k_X=k_Y=0$:

     \begin{figure}[h!]
                \centering
                \includegraphics[width=6cm]{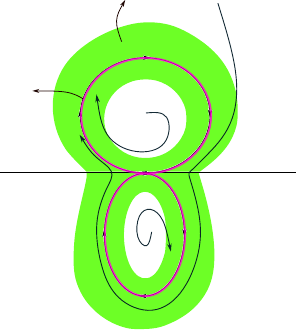}
                \put(2,87){\small $\Sigma$}
                \put(-100,195){\small $Inv^+(N)$}
                \put(-195,133){\small $Inv^-(N)$}
                \caption{Description of the invariant sets $Inv^\pm (N)$ for the case $k_X=k_Y=0$.}
                \label{fig8_1}
 \end{figure}
 
    In this case, both cycles $\gamma_X$ and $\gamma_Y$ are attracting for the smooth vector fields $X$ and $Y$, respectively. Hence, $|x|>|\varphi_Z(t_x,x)|$ for $x \in \Sigma \cap N$ and $\gamma$ is an attracting orbit which implies that $Inv^-(N)=\gamma$ and $Inv^+(N)=N$, see \autoref{fig8_1}. Therefore, an index pair may be chosen with $P\simeq S^1\vee S^1 $ and $Q=\emptyset$. Then the Conley index is given by
\[
\mathcal{C}(\gamma) \cong H_*(S^1 \vee S^1).
\]

    \item  Case $k_X=k_Y=1$: 
    \begin{figure}[H]
        \centering
        \includegraphics[width=6cm]{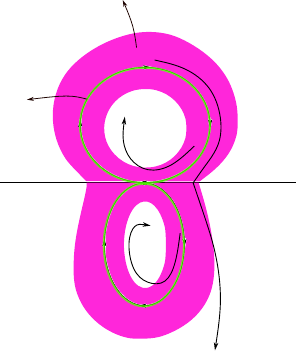}
        \put(2,94){\small $\Sigma$} 
        \put(-110,205){\small $Inv^-(N)$}
        \put(-195,141){\small $Inv^+(N)$}
        \caption{Description of the invariant sets $Inv^\pm (N)$ for the case $k_X=k_Y=1$.}
        \label{fig8_2}
\end{figure}
  When both limit cycles $\gamma_X$ and $\gamma_Y$ are repelling, then $|x|<|\varphi_Z(t_x,x)|$ for $x \in \Sigma \cap N$ and $\gamma$ is repelling. In this case, $Inv^-(N)=N$ and $Inv^+(N)=\gamma$, see \autoref{fig8_2}.
Then $P\simeq N$ and $Q\simeq \partial P$ which implies $$\mathcal{C}(\gamma)=H_*(P,Q)\cong \widetilde{H}_*(P/Q)= \widetilde{H}_*(S^2 \vee S^1 \vee S^1),$$ since $(P,Q)$ is a good pair.

        \item Case $k_X \ne k_Y$: 

Assume without loss of generality that $\gamma_X$ is attracting ($k_X=0$) and $\gamma_Y$ is repelling ($k_Y=1$). The opposite configuration is analogous.

\begin{figure}[h!]
         \centering
         \includegraphics[width=14cm]{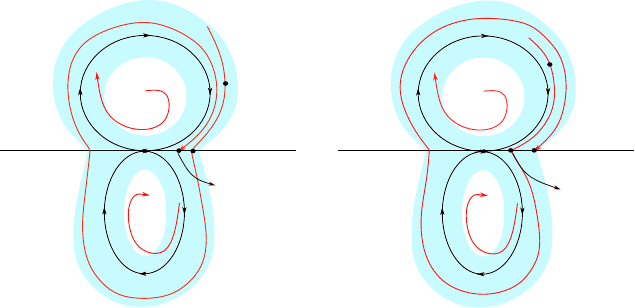}
         \put(2,95){\small $\Sigma$}
         \put(-210,95){\small $\Sigma$}
         \put(-254,140){\small $x_0$}
         \put(-47,151){\small $x_0$}
         \put(-273,93){\small $x_1$}
         \put(-45,73){\small $x_1$}
         \put(-260,75){\small $\varphi_Z(t_{x_1},x_1)$}
         \put(-65,92){\small $\varphi_Z(t_{x_1},x_1)$}
         \caption{Attracting and repelling dynamics near $\gamma$, respectively.}
         \label{fig_5}
     \end{figure}
Consider a trajectory with initial condition $x_0\in N$ near $\gamma_X$. Then there exists $t_1>0$ such that $x_1:=\varphi_X(t_1,x_0) \in \Sigma$. Now, by discussion above, there exists $t_{x_1}>0$ that defines the return map $\varphi_Z(t_{x_1},x_1) \in \Sigma$.
    Depending on the dynamics, near $\gamma$ one of two behaviors may occur (see \autoref{fig_5}):
    \begin{enumerate}
        \item \textbf{Concatenated attracting behavior:} The trajectory moves away from $\gamma_Y$ but returns to $\Sigma$ in a neighborhood of $p$ such that $|x_1|>|\varphi_Z(t_{x_1},x_1)|$. Then the figure-eight cycle acts as an attracting cycle. In this case .
        \item \textbf{Concatenated repelling behavior:} The trajectory evolves along the orbit $\gamma_Y$ under the flow generated by $\gamma_Y$ and eventually exits $N$ through the outer boundary, so $\gamma$ is locally repelling. Equivalently, the associated return map is expansive near $\gamma$ and $|x_1|<|\varphi_Z(t_{x_1},x_1)|$.
    \end{enumerate}

Let $(P,Q)$ be an index pair for $\gamma$ contained in $N$:
\begin{itemize}
    \item $P$ deformation retracts onto $N$ as a compact neighborhood of $Inv^-(N)$, and hence onto the figure-eight $S^1 \vee S^1$;
    \item $Q\cong\partial P \cap \partial N$ .
\end{itemize}

- In the attracting scenario, $Inv^-(N)=\gamma \cup U$ where $U\subset N$ is the region of $N$ inside $\gamma_Y$, see \autoref{fig8_3}. Then, $Q$ represents the boundary of this region, $\partial U$, and since $(P,Q)$ is a good pair, we can take the quotient space. Therefore, $Q$ contracts only part of the figure-eight, leaving one circle homotopically unchanged:
\[
P/Q \simeq S^1.
\]
\begin{figure}[h!]
    \centering
    \includegraphics[width=6cm]{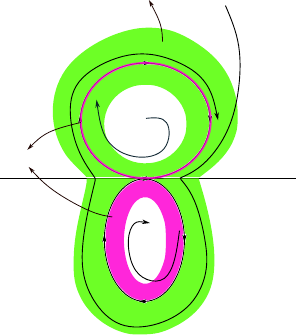}
    \put(2,87){\small $\Sigma$}
    \put(-98,196){\small $Inv^+(N)$}
    \put(-180,99){\small $Inv^-(N)$}
    \caption{Description of the invariant sets $Inv^\pm (N)$ for the attracting behavior, where $k_X=0$ and $k_Y=1$.}
    \label{fig8_3}
\end{figure}

- In the repelling scenario, $Inv^-(N)=N \setminus V$ where $V \subset N$ is the region of $N$ inside $\gamma_X$, see \autoref{fig8_4}.

\begin{figure}[h!]
    \centering
    \includegraphics[width=6cm]{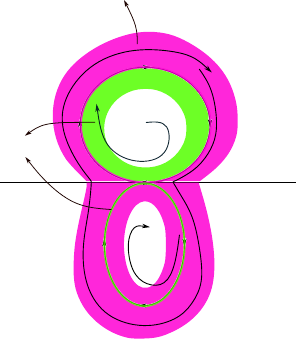}
    \put(2,87){\small $\Sigma$}
    \put(-112,196){\small $Inv^-(N)$}
    \put(-180,108){\small $Inv^+(N)$}
    \caption{Description of the invariant sets $Inv^\pm (N)$ for the repelling behavior, where $k_X=0$ and $k_Y=1$.}
    \label{fig8_4}
\end{figure}

Then, since $Q\cong\partial P \cap \partial N$, we can look for the quotient space and 
\[
P/Q\simeq [(S^2\setminus D^1),\{x,y\}] \simeq S^1.
\]

\paragraph{Opposite configuration.}
If $\gamma_X$ is repelling and $\gamma_Y$ is attracting, the argument is analogous: concatenated flows may either trap trajectories near the attracting branch or allow them to escape along the repelling branch. In both cases, the Conley index is
\[
\mathcal{C}(\gamma) \cong \widetilde{H}_*(S^1).
\]

\end{itemize}
\end{proof}

\subsubsection{Linear Hybrid Cycles}

In this subsection, we study hybrid limit cycles in linear planar Filippov vector fields. These are periodic orbits composed by trajectories of the regular vector fields $X$ and/or $Y$ together with trajectories of the sliding vector field defined on the switching manifold $\Sigma$. Such cycles combine crossing, sliding and escaping dynamics, providing a natural extension of classical periodic orbits in nonsmooth systems.

Since both vector fields $X$ and $Y$ are linear, each admits at most one tangency point with the switching manifold. Moreover, in this section we restrict our attention to when the tangency points are actually regular tangency points. As a consequence, there can be at most one sliding or escaping region, which implies that the system admits at most one hybrid cycle. According to the structure of the orbit, we distinguish two classes of hybrid limit cycles, see \autoref{hybrid}.

\begin{figure}[H]
    \centering
    \includegraphics[width=14cm]{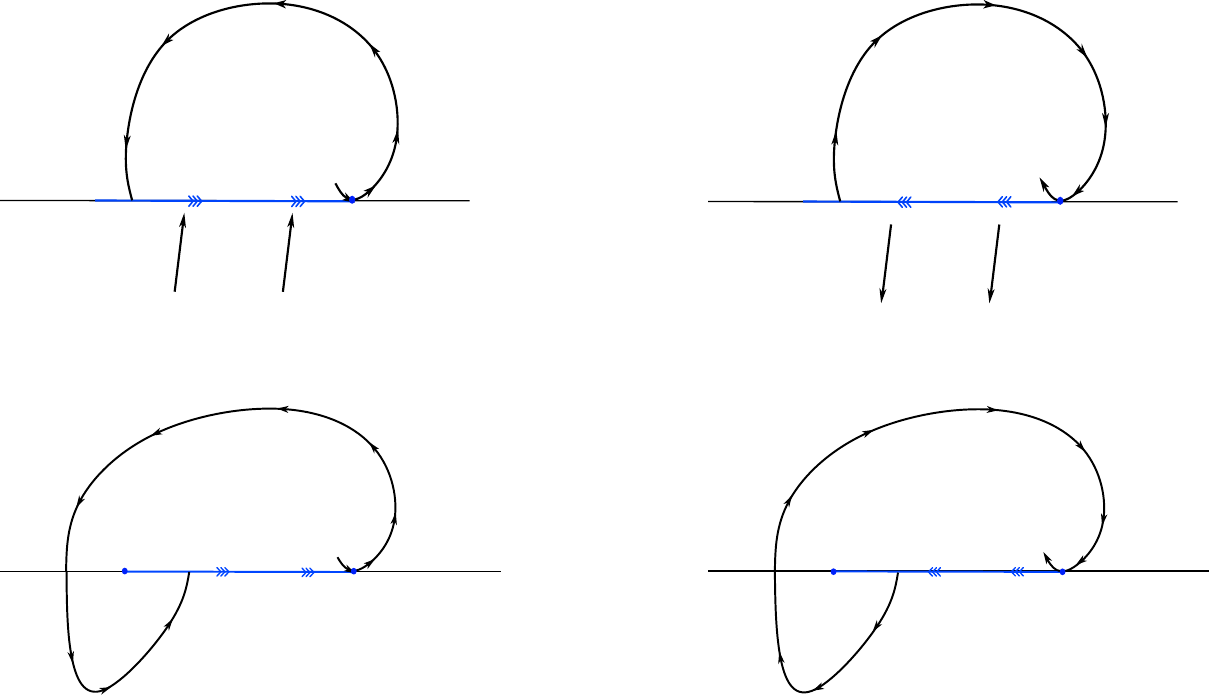}
    \put(-410,160){\small $\Sigma$}
    \put(-410,38){\small $\Sigma$}
    \put(-178,160){\small $\Sigma$}
    \put(-178,38){\small $\Sigma$}
    \put(-400,200){\small $X$}
    \put(-400,80){\small $X$}
    \put(-168,200){\small $X$}
    \put(-168,80){\small $X$}
    \put(-400,140){\small $Y$}
    \put(-400,0){\small $Y$}
    \put(-168,140){\small $Y$}
    \put(-168,0){\small $Y$}
    \put(-285,155){\small $p$}
    \put(-52,155){\small $p$}
    \put(-285,35){\small $p$}
    \put(-52,35){\small $p$}
    \put(-220,180){\small Type I}
    \put(-220,60){\small Type II}
    \caption{Examples of type I and type II cycles.}
    \label{hybrid}
\end{figure}

\begin{itemize}
    \item A hybrid limit cycle $\gamma$ is said to be of \textbf{type I} if $\gamma$ is composed of trajectories of the sliding vector field in $\Sigma^{e,s}$ together with trajectories of only one regular vector field, either $X$ or $Y$.

    \item A hybrid limit cycle $\gamma$ is said to be of \textbf{type II} if $\gamma$ is composed of trajectories of the sliding vector field in $\Sigma^{e,s}$ together with trajectories of both regular vector fields $X$ and $Y$.
\end{itemize}

We now justify that hybrid limit cycles of both types can be regarded as isolated invariant sets. The analysis depends on whether the cycle contains a sliding region or an escaping region.

Let $N$ be a compact tubular neighborhood of $\gamma$ sufficiently close to it, that is, a compact annular neighborhood diffeomorphic to $S^1\times[-1,1]$ and containing $\gamma$ in its interior. Without loss of generality, assume that $p$ is a visible tangency point of the vector field $X$, that is, $Xh(p)=0$, $X^2h(p)>0$ and $Yh(p)\ne 0$.

\textbf{Sliding case.} Assume first that $\gamma$ contains a sliding segment of $\Sigma^s$, and that $Yh(p)>0$. Let $x \in N$ be sufficiently close to $\gamma$. If $\gamma$ is of type I, then for any $x \in X \cap N$, the positive trajectory of $x$ approaches the switching manifold and eventually reaches the sliding segment, which belongs to $\gamma$.

In the case of a type II cycle, the trajectory first crosses the switching manifold and follows the flow generated by the vector field $Y$ before concatenating with the sliding dynamics and subsequently entering $\gamma$. If $x \in Y \cap N$, the trajectory initially evolves according to $Y$ and eventually reaches the sliding segment.

Consequently, every forward trajectory starting in $N$ remains in $N$ for all positive time and approaches $\gamma$. Therefore,
\[
Inv^{+}(N)=N.
\]

On the other hand, the set $Inv^-(N)$ is precisely the cycle $\gamma$. Indeed, since $p$ is a visible tangency point of the vector field $X$ and the trajectory returns to $\Sigma$, there exists an invariant set $S$ contained in the region of $X$ delimited by $\gamma$. Furthermore, for any point $x \in X \cap N \setminus \{\gamma\}$ in the region bounded by $\gamma$, the $\alpha$-limit set satisfies $\alpha(x) \subset S$.

 \begin{figure}[H]
     \centering
     \includegraphics[width=14cm]{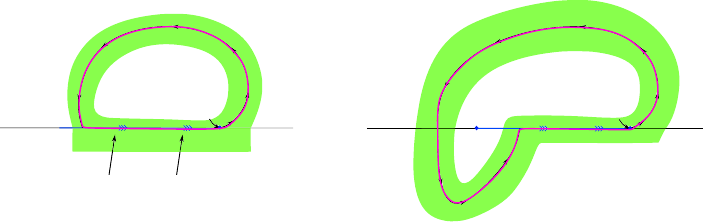}
     \put(-230,50){\small $\Sigma$}
     \put(2,50){\small $\Sigma$}
     \put(-275,47){\small $p$}
     \put(-46,47){\small $p$}
     \put(-230,90){\small $Inv^+(N)=N$}
     \put(-230,80){\small $Inv^-(N)=\gamma$}
     \caption{Description of the sets $Inv^\pm (N)$ for the sliding case.}
     \label{hyb_1}
 \end{figure}
 
If $x \in X \cap N \setminus \{\gamma\}$ lies outside the region bounded by $\gamma$, then, in backward time, the trajectory reaches the crossing region and follows the dynamics generated by the vector field $Y$, without entering the sliding region.
Thus, no point $x\in N\setminus \gamma$ admits a complete negative orbit entirely contained in $N$. The only points whose admits trajectories that remain in $N$ for all negative time are the points of the periodic orbit itself. Therefore,
\[
Inv^{-}(N)=\gamma.
\]
 Moreover, $\gamma$ is a locally attracting cycle, see \autoref{hyb_1}.
 
\noindent
\textbf{Escaping case.}

 \begin{figure}[H]
     \centering
     \includegraphics[width=14cm]{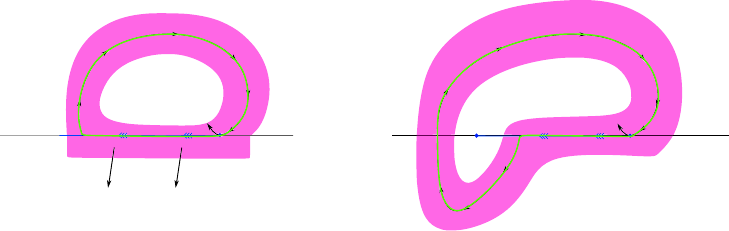}
     \put(-235,48){\small $\Sigma$}
     \put(2,48){\small $\Sigma$}
     \put(-279,45){\small $p$}
     \put(-57,45){\small $p$}
     \put(-230,90){\small $Inv^-(N)=N$}
     \put(-230,80){\small $Inv^+(N)=\gamma$}
     \caption{Description of the sets $Inv^\pm (N)$ for the escaping case.}
     \label{hyb_2}
 \end{figure}

Suppose now that $\gamma$ contains an escaping segment of $\Sigma$. In this case, the dynamics is obtained from the sliding case by time reversal. Consequently, the hybrid cycle $\gamma$ is an repelling isolated invariant set, see \autoref{hyb_2}. More precisely, there exists a neighborhood $N$ of $\gamma$ such that
\[
Inv^{+}(N)=\gamma \quad \text{and} \quad Inv^{-}(N)=N.
\]

\begin{proposition}
Let $\gamma$ be a hybrid limit cycle of a linear planar Filippov vector field and assume that every tangency point $p$ is a regular tangency point.

\begin{itemize}
    \item If $\gamma$ contains a sliding segment, then $\gamma$ is an isolated invariant set satisfying
    \[
    Inv^{+}(N)=N,
    \qquad
    Inv^{-}(N)=\gamma,
    \]
    for a sufficiently small isolating neighborhood $N$. In particular, $\gamma$ is locally attracting in forward time, and its Conley index is
    \[
    \mathcal{C}(\gamma)=H_*(S^1).
    \]

    \item If $\gamma$ contains an escaping segment, then $\gamma$ is an isolated invariant set satisfying
    \[
    Inv^{-}(N)=N,
    \qquad
    Inv^{+}(N)=\gamma,
    \]
    for a sufficiently small isolating neighborhood $N$. In particular, $\gamma$ is locally repelling in forward time, and its Conley index is
    \[
    \mathcal{C}(\gamma)= \widetilde{H}_*(S^2\vee S^1).
    \]
\end{itemize}
\end{proposition}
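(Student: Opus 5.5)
The plan is to take the descriptions of $Inv^\pm(N)$ established in the sliding and escaping discussions preceding the statement as the dynamical input. From them we build explicit Mrozek index pairs as in \autoref{def18} and compute $H_*(P,Q)$. Throughout, $N$ is the compact annulus $N\cong S^1\times[-1,1]$ around $\gamma$. Isolation comes from the same fact in both cases: one of $Inv^\pm(N)$ equals $\gamma$, so $Inv(N)=Inv^+(N)\cap Inv^-(N)=\gamma\subset \operatorname{int} N$. Hence $N$ is an isolating neighborhood and $\gamma$ is an isolated invariant set.

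\textbf{Sliding case.} Here $Inv^+(N)=N$ and $Inv^-(N)=\gamma$.
\begin{enumerate}
\item Set $Q=\emptyset$ and let $P\subset N$ be a thin compact annulus around $\gamma$ that is forward invariant.
\item To build $P$, I would bound it by two closed curves transverse to the Filippov multiflow and pointing inward. On the $X$-side inside $\gamma$, use a level curve of a Lyapunov-type function coming from the first-return map on a transversal through the sliding segment. Outside $\gamma$, take a curve formed by an arc of a $Y$-orbit (type II) or an $X$-orbit (type I), together with a short piece of $\Sigma^s$; both the $X$- and $Y$-vectors point into the sliding region there.
\item Check the axioms. Strong positive invariance holds because every element of $\mathcal{F}_Z$ on $\partial P$ points into $P$. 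We have $Inv^-(N)=\gamma\subset\operatorname{int}P$, $Inv^+(N)\cap Q=\emptyset$ trivially, and $\overline{P\setminus Q}=P\subset\operatorname{int}N$.
\item Since $P$ deformation retracts onto $\gamma\cong S^1$, we get $\mathcal{C}(\gamma)=H_*(P)\cong H_*(S^1)$.
\end{enumerate}

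\textbf{Escaping case.} I would obtain this by time reversal, as in the paragraph before the statement.
\begin{enumerate}
\item Here $Inv^-(N)=N$ and $Inv^+(N)=\gamma$.
\item Take $P=N$ and let $Q$ be a thin collar of $\partial N$, namely $S^1\times([-1,-1+\epsilon]\cup[1-\epsilon,1])$. Arrange $\partial N$ so that the multiflow is strictly outward along it; this is the reversed version of the sliding construction, now using escaping segments.
\item Then $Q$ is strongly positively invariant relative to $\pi|_N$, because trajectories in the collar only move outward until they leave $N$. Also $Inv^+(N)=\gamma\subset N\setminus Q$, and $\overline{P\setminus Q}\subset\operatorname{int}N$ holds up to the usual shrinking.
\item Computing, $H_*(N,Q)\cong H_*(S^1\times[-1,1],S^1\times\{\pm1\})\cong\widetilde H_*\big((S^1\times[-1,1])/(S^1\times\{\pm1\})\big)$. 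The quotient is the suspension-type space $\Sigma(S^1_+)\simeq S^2\vee S^1$. The torus-pinch computation is the same one used in the crossing and sliding propositions and gives $\widetilde H_*(S^2\vee S^1)$.
\end{enumerate}

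The main obstacle is the strong positive invariance in the multivalued sense near the visible fold $p$, where orbits are non-unique. At points of $\Sigma^c$ close to $p$ a trajectory may either follow $X$ along its tangency or enter the sliding/escaping segment. The boundary curves of $P$ (resp. of the collar $Q$) must therefore be chosen so that they cross $\Sigma$ in the crossing region, away from $p$. At those points $\mathcal{F}_Z$ is a convex combination of $X$ and $Y$, both pointing consistently across the curve. I would first try choosing these curves as pieces of orbits of $X$ and $Y$ slightly displaced from $\gamma$, using linearity (a single regular tangency) to control where they meet $\Sigma$. Convexity of $\mathcal{F}_Z$ then gives the inward (resp. outward) condition on all of $\partial P$.
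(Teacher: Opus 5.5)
Your proposal is correct and follows the paper's route: read off $Inv^{\pm}(N)$ from the preceding discussion, take an index pair of the form (neighbourhood of $\gamma$, $\emptyset$) in the sliding case and ($N$, neighbourhood of $\partial N$) in the escaping case, and compute $H_*(S^1)$ and $\widetilde{H}_*(N/\partial N)\cong\widetilde{H}_*(S^2\vee S^1)$; your thin annulus $P$ and collar $Q$ in fact satisfy Definition~\ref{def18}, whereas the paper's literal choices $P=Inv^-(N)=\gamma$ and $Q=\partial N$ violate $Inv^-(N)\subset int(P,N)$ and $\overline{P\setminus Q}\subset int(N)$ respectively. One correction to your plan: a thin annulus around $\gamma$ always has one boundary circle crossing $\Sigma$ inside $\Sigma^{s}$ (resp.\ $\Sigma^{e}$), just beyond the end of the sliding segment opposite to $p$, so the crossings cannot all be kept in $\Sigma^c$; at that crossing, the inward (resp.\ outward) condition for all of $Conv(\{X,Y\})$ comes instead from taking the curve's normal in the cone $\{n:\langle n,X\rangle>0,\ \langle n,Y\rangle>0\}$ (sign-reversed in the escaping case), which is nonempty because there is no pseudo-equilibrium near $\gamma$.
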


\begin{proof}
The proof of the isolated invariant property follows from the previous discussion.

In the sliding case, $P=Inv^-(N)$ and $Q= \emptyset$ is an index pair for $\gamma$ according to the definition of the Conley index by Mrozek. Then,
\[
\mathcal{C}(\gamma)=H_*( S^1).
\]

In the escaping case, the time orientation is reversed. In this case, since $Inv^-(N)=N$, it follows that $(P,Q)=(N, \partial N)$ is an index pair for $\gamma$. Hence, $$\mathcal{C}(\gamma)=H_*(N,\partial N)\simeq \widetilde{H}_*(N/\partial N)$$

Since $N$ is homeomorphic to the annulus $S^1\times[0,1]$, collapsing both boundary components to a single point produces a space homotopy equivalent to
\[
S^2\vee S^1.
\]
Therefore,
\[
\mathcal{C}(\gamma)\simeq \widetilde{H}_*( S^2\vee S^1).
\]

\end{proof}

\section{Final Remarks}\label{sec:finalremarks}
Filippov systems provide a natural and widely used framework for modeling phenomena with abrupt changes. Despite their relevance, the global understanding of discontinuous vector fields remains incomplete. The presence of sliding motion, non-uniqueness of solutions, and mixed-type trajectories introduce significant challenges to classical approaches based on smooth flows.

In this study, we analyzed the local topological structure of periodic invariant sets by computing their Conley indices within the framework of multivalued dynamics. Although these results are local in nature, they contribute to a broader program in which invariant sets serve as building blocks for global dynamics.

A systematic description of these local invariants is expected to play an important role in understanding the global organization of Filippov systems. Its topological characterization may be useful in future studies of Morse decompositions, continuation phenomena, and global bifurcations in nonsmooth dynamics.

{%
\renewcommand{\bibname}{Referências}
}
\bibliography{bib.bib}
\bibliographystyle{acm}

\vspace{1cm}

{\small 
\noindent L. Cândido\\[-5pt]
Institute of Mathematics, Statistics and Scientific Computing (IMECC) \\[-5pt]
University of Campinas\\[-5pt]
R. Sérgio Buarque de Holanda, 651, Campinas - SP, 13083-859, BRAZIL \\[-5pt]
e-mail: leticia.souzacndd@gmail.com

\vspace{0.35cm}

\noindent D. V. S. Lima\\[-5pt]
Center of Mathematics, Computing and Cognition (CMCC)\\[-5pt]
Federal University of ABC\\[-5pt]
Av. dos Estados, 5001 - Santo André - SP - CEP: 09280-560, BRAZIL\\[-5pt]
e-mail: dahisy.lima@ufabc.edu.br

\vspace{0.35cm}

\noindent K. A. de Rezende\\[-5pt]
Institute of Mathematics, Statistics and Scientific Computing (IMECC) \\[-5pt]
University of Campinas\\[-5pt]
R. Sérgio Buarque de Holanda, 651, Campinas - SP, 13083-859, BRAZIL \\[-5pt]
e-mail: ketty@unicamp.br
}
\end{document}